\def\eps{\varepsilon}
\def\trace{{\rm Tr\,}}
\def\covar{{\rm Cov\,}}
\newcommand{\norm}[1]{\left\lVert#1\right\rVert}
\newcommand{\RF}{K}
\newcommand{\args}{(\cdot)}
\newcommand{\R}{\mathbb{R}}
\newcommand{\N}{\mathbb{N}}
\newcommand{\K}{\mathcal{K}}
\newcommand{\F}{\mathcal{F}}
\newcommand{\Ltwo}[1]{L^2(\Omega,\F,\mathbb{P};#1)}
\newcommand{\Lp}[2][p]{L^{#1}(\Omega,\F_k,\mathbb{P};#2)}
\newcommand{\Pset}{\mathcal{S}}
\newcommand{\Dset}{\mathcal{D}}
\newcommand{\Mset}{\mathcal{M}}
\newcommand{\Lset}{\mathcal{L}}
\newcommand{\Normal}{\mathcal{N}}
\newcommand{\Filtr}{(\F_k)_{k \in \N_0}}
\newcommand{\Prob}{\mathbb{P}}
\newcommand{\E}{\mathbb{E}}
\newcommand{\V}{\mathbb{V}}
\newcommand{\Exp}[1]{\mathbb{E}\left[#1\right]}
\newcommand{\Ell}{\ell}  
\newcommand{\J}{J} 
\newcommand{\xeq}{x^s}
\newcommand{\ueq}{u^s}
\newcommand{\Xstat}{X^s}
\newcommand{\Ustat}{U^s}
\newcommand{\XstatOpt}{X^s_{\star}}
\newcommand{\UstatOpt}{U^s_{\star}}
\newcommand{\XstatOptbar}{\bar{X}^s_{\star}}
\newcommand{\UstatOptbar}{\bar{U}^s_{\star}}
\newcommand{\DistrStatOptX}{\varrho^{s,\star}_{X}}
\newcommand{\Xshift}{\hat{X}}
\newcommand{\Ushift}{\hat{U}}
\newcommand{\Wshift}{\hat{W}}
\newcommand{\sshift}{\hat{r}}
\newcommand{\vshift}{\hat{v}}
\newcommand{\cshift}{\hat{c}}
\newcommand{\xeqopt}{\tilde{x}^s_{\star}}
\newcommand{\ueqopt}{\tilde{u}^s_{\star}}
\newcommand{\Xdist}{\tilde{X}}
\newcommand{\Udist}{\tilde{U}}
\title{Turnpike and dissipativity in generalized discrete-time stochastic linear-quadratic optimal control
  \thanks{Submitted to the editors September 11, 2023. 
    \funding{This work was funded by the Deutsche Forschungsgemeinschaft (DFG, German Research Foundation) \-- project number 499435839.}
  }
}
\author{Jonas Schießl\thanks{Mathematical Institute, University of Bayreuth, Germany,
  (\email{jonas.schiessl@uni-bayreuth.de}, \email{michael.baumann@uni-bayreuth.de}, \email{lars.gruene@uni-bayreuth.de}).}
\and Ruchuan Ou\thanks{Institute of Control Systems, Hamburg University of Technology, Hamburg, Germany, main part of this work have been conducted while the authors were with the Institute of Energy Systems, Energy Efficiency and Energy Economics, TU Dortmund University,
Germany (\email{ruchuan.ou@tuhh.de}, \email{timm.faulwasser@ieee.org}).}
\and Timm Faulwasser \footnotemark[3]
\and Michael H. Baumann\footnotemark[2]
\and Lars Grüne\footnotemark[2]
}
\begin{document}

\maketitle

\begin{abstract}
  We investigate different turnpike phenomena of generalized discrete-time stochastic linear-quadratic optimal control problems. Our analysis is based on a novel strict dissipativity notion for such problems, in which a stationary stochastic process replaces the optimal steady state of the deterministic setting. We show that from this time-varying dissipativity notion, we can conclude turnpike behaviors concerning different objects like distributions, moments, or sample paths of the stochastic system and that the distributions of the stationary pair can be characterized by a stationary optimization problem. The analytical findings are illustrated by numerical simulations.
\end{abstract}

\begin{keywords}
  Stochastic Optimal Control, Turnpike Property,
  Dissipativity, Stochastic Systems
\end{keywords}

\begin{MSCcodes}
  93E20, 49N10, 93E15
\end{MSCcodes}

\section{Introduction}

    In deterministic settings, the most common variant of turnpike behavior describes the phenomenon that optimal and near-optimal trajectories spend most of their time close to the optimal steady state independent of the time horizon of the problem. Thus, turnpike properties are valuable for analyzing the long-time behavior of (near-)optimal solutions of an optimal control problem (OCP). This has become particularly evident in the analysis of model predictive and receding horizon control schemes \cite{Faulwasser2018,Faulwasser2022,Gruene2013,Gruene2022}.
    Although the turnpike property was observed already in the first half of the 20th century by Ramsey~\cite{Ramsey1928} and von Neumann~\cite{Neumann1945} and its first in-depth theoretical study goes back to the 1950s \cite{Dorfman1958}, it is still subject of recent research.
    For deterministic problems, it is also well known that there is a strong relationship between the turnpike property and the concept of strict dissipativity introduced by Willems \cite{willems1972a,willems1972b}. This relationship was, for instance, investigated in \cite{Gruene2018} for linear-quadratic problems and in \cite{Faulwasser2017,Gruene2016} for general nonlinear systems. A remarkable result in this context is that strict dissipativity and the occurence of the turnpike behavior are equivalent under suitable conditions; see \cite{Gruene2016,Gruene2018}.
    
    While all these results apply to deterministic settings and the concepts of strict dissipativity and turnpike behavior are rather well understood there, much fewer results are known for stochastic systems. Whereas in the deterministic case it is clear how to characterize the distance between two solutions (at least for finite-dimensional problems), this is much more challenging for stochastic processes since there exist various metrics involving random variables and their distributions. So far, the literature was mainly concerned with analyzing turnpike behaviors in the sense of distributions or moments; see \cite{Kolokoltsov2012, Marimon1989, Sun2022}, and also existing dissipativity notions are defined on the underlying measure spaces; see \cite{Gros2022}. 
    In particular, \cite{Sun2022} establishes an exponential turnpike property for the expectation values of stochastic linear-quadratic OCPs with multiplicative noise in continuous time.
    Yet, numerical simulations for stochastic linear-quadratic OCPs also suggest turnpike properties for single realization paths; see \cite{Ou2021}, even in the case of additive noise, which always prevents the turnpike to be an equilibrium.
    First theoretical results on this pathwise behavior were presented in \cite{Sun2023} in a continuous-time setting with state and control-dependent noise, based on algebraic Riccati equations and backward stochastic differential equations. 
    In contrast to this, in this paper we present a dissipativity-based approach to analyze and characterize stochastic turnpikes, including pathwise phenomena.
    For this purpose, we introduce a new time-varying dissipativity notion for random variables considered as $L^2$ functions on the probability space. 
    This notion is based on the $L^2$ norm or mean-square distance of the solutions, where the deterministic steady state is replaced by a pair of stochastic processes with stationary distributions. 
    This concept of stochastic dissipativity differs from the---to the best of the authors knowledge---only alternative stochastic dissipativity definition tailored to optimal control problems in \cite{Gros2022} because the mean-square distance cannot be expressed by means of probability measures, which is at the core of the notion from \cite{Gros2022}. This allows us to conclude a larger set of turnpike properties from our new notion, including the already mentioned pathwise turnpike property in probability.
    Preliminary results in this direction are contained in the authors' recent conference paper \cite{CDCPaper}. The main contributions of the present paper compared to this conference paper are to show that the $L^2$ dissipativity allows us to directly infer various turnpike properties and to precisely characterize the corresponding turnpike object. More precisely, we show that our dissipativity notion does not only imply different turnpike properties for the pathwise behavior of the optimal solutions but also for the distributions and certain moments. 
    In our case, the distance between distributions is measured by the Wasserstein metric, as this is the natural counterpart to the $L^2$ norm we use on the space of random variables. 
    Moreover, we formulate a stationary optimization problem that uniquely characterizes the distribution of the stationary state process at which the turnpike phenomenon occurs. This constitutes another novelty of this paper and shows that this process is an appropriate generalization of the optimal steady state in the deterministic setting. It turns out that this optimization problem is the same by which the steady state distribution in \cite{Gros2022} is defined. Thus, it shows that we look at the same quantities when analyzing the distributions of optimal solutions. A further technical contribution is that we extend the results in \cite{CDCPaper} to stage costs containing linear terms and to arbitrary non-Gaussian noise with finite first and second moments.

  The remainder of this paper is organized as follows. 
  Section~\ref{sec:Setting} introduces the considered problem formulation and recalls the concepts of turnpike and dissipativity in the deterministic setting. 
  In Section~\ref{sec:Dissi}, we introduce the pair of stochastic stationary processes that replaces the deterministic steady state and define a time-varying notion of stochastic dissipativity in $L^2$. Further, we explicitly construct a storage function for the generalized stochastic linear-quadratic OCP and, thus, prove its strict dissipativity. 
  Section~\ref{sec:Turnpike} then shows that this dissipativity notion is strong enough to conclude turnpike properties for the individual realization paths in probability, as well as for distributions and their moments. 
  Moreover, we show in Section~\ref{sec:StatOpt} that the distribution of the stationary state process is uniquely characterized by the solution of a stationary optimization problem, and we illustrate our results by numerical examples in Section~\ref{sec:Examples}. 
  Section~\ref{sec:Conclusion} concludes the paper.

  \textbf{Notation:} With $\N$, $\R$ we denote the natural and real numbers and we set $\N_0 := \N \cup \{0\}$. For a quadratic matrix $Q \in \R^{n \times n}$ we write $Q \geq 0$ if it is positive semidefinite and $Q > 0$ if it is positive definite. $\trace(Q)$ denotes the trace of a matrix $Q \in \R^{n \times n}$ and for $Q \geq 0$ and $x \in \R^n$ we abbreviate $\Vert x \Vert_Q := \sqrt{x^T Q x}$. If $Q = I$ is the identity we write $\Vert x \Vert = \Vert x \Vert_I$. For a random variable $X$ the expected value and covariance are denoted as $\E[X]$ and $\covar(X)$ and if $X$ is onedimensional $\V[X] := \covar(X)$ is the variance of $X$.

\section{Setting and preliminary results} \label{sec:Setting}

  \subsection{Problem formulation} 
  For $A \in \R^{n \times n}$, $B \in \R^{n \times l}$, with $(A,B)$ stabilizable, $E \in \R^{n \times m}$, and $z \in \R^n$ we consider linear stochastic systems of the form 
  \begin{equation} \label{eq:StochSystem}
      X(k+1) = AX(k) + BU(k) + EW(k) + z, \quad X(0) = X_0
  \end{equation}
  where for each $k \in \N_0$,  $X(k) \in \Lp[2]{\R^n}$,  $U(k) \in \Lp[2]{\R^l}$ and $W(k) \in \Ltwo{\R^m}$ holds. Here $(\Omega,\F,\Prob)$ is a probability space, $\Omega$ is the set of realizations, $\Prob$ is the probability measure, $\F$ is a $\sigma$-algebra, and $\Filtr$ is a filtration following the usual hypotheses of \cite{Protter2005}.
  The value $z \in \R^n$ represents a non-stochastic additive constant which could appear in system \eqref{eq:StochSystem}. 
  Similar models as \eqref{eq:StochSystem} were also considered in \cite{Aastroem1970,Caines2018,Kumar2015} in a general stochastic control theoretic framework and utilized in \cite{Mesbah2016,Heirung2018,Hewing2020,Farina2015} to formulate stochastic predictive control schemes.
  For our purpose, we choose $\Filtr$ as the smallest filtration such that $X$ is an adapted process, i.e.
  \begin{equation} \label{eq:Filtration}
    \F_k = \sigma(X(0),\ldots,X(k)), ~ \text{for all} ~ k \in \N_0.
  \end{equation}
  This choice of the stochastic filtration is also called the minimal \cite{Fristedt1997} or natural filtration \cite{Protter2005} of the stochastic process $X\args$ and induces a causality requirement, which guarantees that we only take past but not future events into account for our control design.
  More precisely, the filtration \eqref{eq:Filtration} ensures that the control action $U(k)$ at time $k$ only depends on the observed states $\lbrace X(s) \rbrace_{s=0,\ldots,k-1}$ and the influence of the disturbances $\lbrace W(s) \rbrace_{s=0,\ldots,k-1}$ is implicitly handled via the state recursion. 
  Thus, our control laws correspond to state feedbacks and not to direct disturbance feedbacks.
  We refer to \cite{Fristedt1997, Protter2005} for more details on stochastic filtrations.\\
  Further, we assume that $\lbrace W(k) \rbrace_{k \in \N_0}$ is an arbitrary but fixed sequence of \textit{i.i.d.} random variables and that for every $k \in \N_0$ the random variable $W(k) \sim \varrho_W$ is independent of $X(k)$ and $U(k)$. Here $\varrho_W$ is a not necessarily Gaussian probability distribution with finite mean $\mu_W \in \R^m$ and covariance matrix $\Sigma_W \in \R^{m \times m}$.
  For a given initial value $X_0 \sim \varrho_0$ and control $U\args$, we denote the solution of system \eqref{eq:StochSystem} by $X_{U}(\cdot,X_0)$, or short by $X\args$ if the initial value and the control are unambiguous. Note, that the solution $X_{U}(\cdot,X_0)$ also depends on the disturbance $W(\cdot)$. However, for the sake of readability, we do not highlight this in our notation. \\
  We abbreviate the dynamics
  \begin{equation}
      X(k+1) = f(X(k),U(k),W(k)) := AX(k) + BU(k) + EW(k) + z
  \end{equation}
  and define the stage cost $\Ell: \Ltwo{\R^n} \times \Ltwo{\R^l} \rightarrow \R$ as
  \begin{equation} \label{eq:stochStagecost}
      \begin{split}
          \Ell(X,U) :=& \Exp{X^T Q_1 X + U^T R_1 U + r^T X + v^T U + c} \\
          &+ \trace \big( Q_2 \covar(X) + R_2 \covar(U) \big),
      \end{split}
  \end{equation}
  where $Q_1$, $Q_2 \in \R^{n \times n}$, and $R_1$, $R_2 \in \R^{l \times l}$, with $Q_1$, $Q_2$, $R_1$ and $R_2$ symmetric, $Q_1,Q_2,R_2 \geq 0$ and $R_1 > 0$ and $s \in \R^n$, $v \in \R^l$, $c \in \R$.

  \begin{remark}
    Note, since 
    \begin{equation} \label{eq:SecondMoment}
        \Exp{X^T Q X} = \trace (Q \covar (X)) + \Exp{X}^T Q \Exp{X}
    \end{equation}
    holds, the quadratic terms in the first line of the stage cost \eqref{eq:stochStagecost} always jointly penalize the quadratic mean and the variance of $X$ and $U$, whereas the second part of the stage cost allows us to model an additional variance penalization that has no impact on the mean.
    While this gives us an additional degree of freedom in the design of the cost function, it is a priori not clear if the mean-square $\Exp{X}^T Q \Exp{X}$ term appearing in \eqref{eq:SecondMoment} has an impact on the solvability of the optimization problem \eqref{eq:sOCP}. However, since $\trace (Q \covar (X,Y))$ is a symmetric bilinear form for all $Q \geq 0$ and $\Exp{X^T R Y}$ is an inner product on $\Ltwo{\R^n}$ for all $R > 0$, we can conclude that $\Exp{X^T Q_1 X} + \trace ( Q_2 \covar(X) )$ from \eqref{eq:stochStagecost} is positive semidefinite and $\Exp{U^T R_1 U} + \trace ( R_2 \covar(U) )$ is positive definite, which guarantees the solvability of the problem.
  \end{remark}
  
  Then the generalized discrete-time stochastic linear-quadratic optimal control problem (OCP) under consideration reads
  \begin{equation} \label{eq:sOCP}
      \begin{split}
          \min_{U\args} \J_N(X_0,U) &:= \sum_{k=0}^{N-1} \Ell(X(k),U(k)) \\
          s.t. ~ X(k+1) &= f(X(k),U(k),W(k)), \quad X(0) = X_0.
      \end{split}
  \end{equation}

  \begin{remark}
      Additional terminal costs may be considered in \eqref{eq:sOCP}. However, since dissipativity is a property of the stage cost and the dynamics but not of the terminal cost, cf.\ Definition~\ref{defn:StochDissi}, we omit this generalization for the discussion in this paper.
  \end{remark}

  \subsection{Deterministic Dissipativity and Turnpike Properties} \label{sec:DetResults}

  Before investigating stochastic properties, we recall the basic concepts of dissipativity and turnpike in the deterministic case.
  Let us denote the deterministic counterpart to system \eqref{eq:StochSystem} by
  \begin{equation} \label{eq:DetSystem}
      x(k+1) = f(x(k),u(k),\mu_W) := Ax(k) + Bu(k) + E \mu_W + z, \quad x(0) = x_0
  \end{equation}
  where $x(k) \in \R^n$, $u(k) \in \R^l$, and $x_0 \in \R^n$. 
  Then the deterministic stage cost is given by
  \begin{equation}
      \ell(x,u) := x^T Q_1 x + u^T R_1 u + r^T x + v^T u + c
  \end{equation}
  for all $(x,u) \in \R^n \times \R^l$ and the deterministic version of problem~\eqref{eq:sOCP} reads
  \begin{equation} \label{eq:OCP}
      \begin{split}
          \min_{u\args} J_N(x_0,u) &:= \sum_{k=0}^{N-1} \ell(x(k),u(k)) \\
          s.t. ~ x(k+1) &= f(x(k),u(k),\mu_W), \quad x(0) = x_0.
      \end{split}
  \end{equation}
  Next, we want to define strict dissipativity in the deterministic case. 
  For this purpose we recall that a pair $(\xeq,\ueq) \in \R^n \times \R^l$ is called steady state of the deterministic system if $\xeq = f(\xeq,\ueq,\mu_W)$ and define the class of comparison functions
  \begin{equation*}
      \begin{split}
          \K_{\infty} := \lbrace \alpha: \R_0^+ \rightarrow \R_0^+ ~\mid~ & \alpha \text{ is continuous, strictly increasing \& unbounded} \\
          & \text{with } \alpha(0)=0 \rbrace.
      \end{split}
  \end{equation*}
  
  \begin{definition}[Deterministic Strict Dissipativity]
      Given a steady state $(\xeq,\ueq)$ the deterministic optimal control problem \eqref{eq:OCP} is called \textit{strictly dissipative} if there exists a storage function $\lambda: \R^n \rightarrow \R$ bounded from below and a function $\alpha \in \K_{\infty}$ such that 
      \begin{equation}
          \ell(x,u) - \ell(\xeq,\ueq) + \lambda(x) - \lambda(f(x,u,\mu_W)) \geq \alpha(\Vert x - \xeq \Vert)
      \end{equation}
      holds for all $(x,u) \in \R^n \times \R^l$.
  \end{definition}

  Another tool for analyzing optimal control problems is the so-called turnpike property. This property exists in various forms; see, e.g., the overview in \cite{Faulwasser2022}. The deterministic variant that is most directly linked to dissipativity is formalized in the following definition.

  \begin{definition}[Deterministic Turnpike] \label{defn:Turnpike}
      Given $(\xeq,\ueq)$ the deterministic optimal control problem \eqref{eq:OCP} has the turnpike property if there exists $\alpha \in \K_{\infty}$ such that the following holds:
      For every $x_0 \in \R^n$ there exist $C > 0$ such that for each $N \in \N$, each control $u\args$ satisfying $J_N(x,u) \leq N \ell(\xeq,\ueq) + \delta$ and each $\eps > 0$ the value $Q_{\eps} := \# \lbrace k \in \lbrace 0,\ldots,N-1 \rbrace \mid \Vert x_u(k,x_0)-\xeq \Vert \leq \eps \rbrace$ satisfies the inequality $Q_{\eps} \geq N - (\delta + C)/\alpha(\eps)$.
  \end{definition}

  The turnpike property defined as in the above theorem says that for any neighborhood of the optimal steady state, for increasing horizon lengths $N$ the optimal trajectories spend most of their time in this neighborhood. More precisely, the number of time instances at which the trajectories are outside this neighborhood is bounded by a constant independent of the horizon length $N$.

  The connection between strict dissipativity and turnpike properties for deterministic OCPs is already well studied; see \cite{Gruene2016,Gruene2018,Gruene2022}. 
  In particular one can prove that strict dissipativity implies turnpike behavior for the deterministic case.

  \begin{lemma}[\hspace{-0.1pt}{\cite[Theorem 5.3]{Gruene2013}}] \label{lem:DetTurnpike}
      Assume that the optimal control problem \eqref{eq:OCP} is strictly dissipative at $(\xeq,\ueq)$. 
      Then it also has the turnpike property in the sense of Definition~\ref{defn:Turnpike}.    
  \end{lemma}

\section{Stationary solutions and stochastic dissipativity} \label{sec:Dissi}

  Our goal in the remainder of this paper is to generalize the deterministic concepts of turnpike and dissipativity from Section~\ref{sec:DetResults} to stochastic optimal control problems.
  For that, we must first clarify how to define a stochastic version of a deterministic steady state.
  We note that, for any $U \in \Lp[2]{\R^l}$, the condition
  \begin{equation}
      X = AX + BU + EW
  \end{equation}
  cannot be satisfied as $U$ may not depend on $W$ according to the underlying filtration. Thus, a random variable that is constant in time is not a suitable candidate for the stochastic counterpart of the optimal steady state. 
  However, we can keep the distribution of such a process constant instead and use the following definition of stationary processes. 

  \begin{definition} \label{defn:StationaryPair}
    Let $\{W(k)\}_{k \in \N_0}$ be the same sequence of random variables as in \eqref{eq:StochSystem}. 
    Then a pair of state and control processes $(\Xstat(\cdot),\Ustat(\cdot))$ is called stationary for system~\eqref{eq:StochSystem} if there are stationary distributions $\varrho_X^s \in \mathcal{P}_2(\R^n)$ and $\varrho_U^s \in \mathcal{P}_2(\R^l)$ such that
    \begin{equation*}
      \Xstat(k) \sim \varrho_X^s,~\Ustat(k) \sim \varrho_U^s, ~ \mbox{and} ~ \Xstat(k+1) = f(\Xstat(k),\Ustat(k),W(k))
    \end{equation*}
    for all $k \in \N_0$. Here $\mathcal{P}_2(\mathcal{X}) := \{ \varrho \in \mathcal{P}(\mathcal{X}) \mid \int_{\mathcal{X}} \Vert x \Vert^2 \varrho(dx) < \infty \}$ and $\mathcal{P}(\mathcal{X})$ denotes the collection of all probability measures on the measurable space $\mathcal{X}$.
  \end{definition}

  \begin{remark} \label{rem:StatProbMeasure}
    The probability measure defining a stationary distribution is called an invariant measure; see \cite{Meyn1993}.
    Hence, an alternative approach to the above stationarity concept could be to switch entirely to the set of underlying probability measures and to conduct the analysis there as in \cite{Gros2022}.
    However, this approach is limited because the metric used to measure the distances only depends on the distribution or the moments of the random variables and not on their representation as a $L^2$ function. Thus, all information about the single realization paths of the solutions is lost---although numerical simulations suggest a turnpike phenomenon for the paths, as well; see \cite{Ou2021}.
  \end{remark}

  \begin{remark} \label{rem:FeedbackStationarity}
    Let $\mathbb{F}(\R^n,\R^l)$ denote the space of measurable maps $\pi:\R^n \to\R^l$ for which $\pi\circ X \in \Lp[2]{\R^l}$ for each $X\in\Lp[2]{\R^n}$.
    If $\Ustat\args$ can be written in a feedback form, i.e., $\Ustat\args = \pi(X^s\args)$ for some $\pi\in \mathbb{F}(\R^n,\R^l)$, then the additional condition $\Ustat(k) \sim \varrho^s_U \in \mathcal{P}_2(\R^l)$ can be omitted since it is always fulfilled if the distribution $\varrho_X^s \in \mathcal{P}_2(\R^n)$ of $\Xstat\args$ is stationary. 
  \end{remark}

  Using Definition \ref{defn:StationaryPair} to characterize stochastic stationarity, we can give the following stochastic version of strict dissipativity for random variables, which, as we will see later, is strong enough to conclude a turnpike behavior of single realization paths.

  \begin{definition}[Mean-square Dissipativity] \label{defn:StochDissi}
    Let $(\Xstat\args,\Ustat\args)$ be a stationary pair according to Definition \ref{defn:StationaryPair}. Then the stochastic OCP \eqref{eq:sOCP} is called \emph{strictly dissipative in $L^2$} (or in \emph{mean square}) at $(\Xstat\args,\Ustat\args)$ if there exists a time-varying storage function $\lambda: \N_0 \times \Ltwo{\R^n}\rightarrow \R$ uniformly bounded from below in the second argument and a function $\alpha \in \K_{\infty}$ such that 
    \begin{equation} \label{eq:sDI}
        \begin{split}
            &\Ell(X(k),U(k)) - \Ell(X^s(k),U^s(k)) + \lambda(k,X(k)) - \lambda(k+1,f(X(k),U(k),W(k))) \\
            &\quad \geq \alpha \Big( \norm{ X(k) - \Xstat(k) }_{L^2}^2 \Big) = \alpha \Big( \E \big[ \norm{ X(k) - \Xstat(k) }^2 \big] \Big)
        \end{split}
    \end{equation}
    holds for all $k \in \N_0$ and $(X(k),U(k)) \in \Lp[2]{\R^n} \times \Lp[2]{\R^l}$.
  \end{definition}

  Dissipation inequalities like \eqref{eq:sDI} have originally been introduced to model energy dissipation in open physical systems. In the context of optimal control, this inequality constitutes an abstract relation between the stage cost and the storage function. From an optimal control point of view, it is notable that the difference of the storage functions on the left side of \eqref{eq:sDI} gives a lower bound on the optimal value function. 

  \begin{remark}
    We note that all terms in this inequality depend on the realizations of the random variables $X^s(k)$, $U^s(k)$ and, implicitly via $f$, on $W(k)$. In contrast to the respective distributions of these random variables, these quantities are time-varying elements of the corresponding $L^2$ spaces. Thus, inequality \eqref{eq:sDI} cannot be satisfied with a storage function constant in time and the dissipativity notion becomes time-varying just as in the deterministic case when the setting becomes time-varying \cite{Gruene2018a,Risbeck2020,Mueller2021}.
    Further, in the resulting dissipativity inequality, the distance measure on the right-hand side also depends on the exact realization of the random variables rather than only on their distributions, cf.\ \eqref{eq:sDI}.
    That means our distance measure is not a dissimilarity measure in the sense of \cite{Gros2022}, and, thus, our concept of stochastic dissipativity differs from the one presented there.
    Moreover, as we will see in Section~\ref{sec:Turnpike}, our dissipativity notion implies both turnpike results regarding the distributions as it would also be possible with the notion of \cite{Gros2022} and, in addition, pathwise turnpike properties. Thus, our dissipativity notion is strong enough to conclude a larger set of turnpike properties as the one presented in \cite{Gros2022}.
  \end{remark}

  The remainder of this section will show how we can construct a storage function $\lambda$ for OCP \eqref{eq:sOCP} to show strict dissipativity in the sense of Definition \ref{defn:StochDissi}. This construction is based on reformulations of the stage cost divided into several steps in a series of lemmas, starting with Lemma \ref{lem:PushShiftToCost}. 

  \begin{lemma} \label{lem:PushShiftToCost}
    Assume that the equation $(I-A)\xeq - B\ueq = E\mu_W + z$
    has a solution $(\xeq,\ueq)$, i.e., the deterministic system \eqref{eq:DetSystem} has a steady state. 
    Then the stochastic OCP \eqref{eq:sOCP} can be reformulated as
    \begin{align*}
      \min_{\Ushift\args} \J_N(X_0,\Ushift) := \sum_{k=0}^{N-1}&\Exp{\Xshift(k)^T Q_1 \Xshift(k) + \Ushift(k)^T R_1 \Ushift(k) + \sshift^T \Xshift(k)+ \vshift^T \Ushift(k) + \cshift} \\
      &+ \trace \big( Q_2 \covar(\Xshift(k)) + R_2 \covar(\Ushift(k)) \big) \\
      s.t. ~ \Xshift(k+1) &= f(\Xshift(k),\Ushift(k),\Wshift(k)) - z, \quad \Xshift(0) = X_0 - \xeq,
    \end{align*}
    where $\Xshift\args := X\args - \xeq$, $\Ushift\args := U\args - \ueq$, $\sshift := r + 2Q_1\xeq$, $\vshift := v + 2R_1\ueq$, $\cshift := {\xeq}^T Q_1 \xeq + {\ueq}^T R_1 \ueq + r^T \xeq + v^T \ueq + c$, and $\Wshift\args := W\args - \mu_W$.
  \end{lemma}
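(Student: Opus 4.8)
The plan is to verify the claimed reformulation by a direct substitution, carefully tracking how each term of the stage cost transforms under the affine shift $X = \Xshift + \xeq$, $U = \Ushift + \ueq$, $W = \Wshift + \mu_W$. First I would confirm that the dynamics transform as stated: substituting into $X(k+1) = AX(k)+BU(k)+EW(k)+z$ gives $\Xshift(k+1)+\xeq = A\Xshift(k)+A\xeq+B\Ushift(k)+B\ueq+E\Wshift(k)+E\mu_W+z$, and the steady-state identity $(I-A)\xeq - B\ueq = E\mu_W + z$, i.e.\ $\xeq = A\xeq + B\ueq + E\mu_W + z$, lets me cancel all the constant terms on the right against $\xeq$ on the left, leaving $\Xshift(k+1) = A\Xshift(k) + B\Ushift(k) + E\Wshift(k)$. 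Writing this as $f(\Xshift(k),\Ushift(k),\Wshift(k)) - z$ matches the claim, since $f(\cdot,\cdot,\cdot) = A(\cdot)+B(\cdot)+E(\cdot)+z$ and the shifted noise $\Wshift$ absorbs the $E\mu_W$ offset. The initial condition is immediate from $\Xshift(0) = X_0 - \xeq$.

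Next I would handle the stage cost term by term. The covariance terms are the easiest: since $\Xshift = X - \xeq$ differs from $X$ by a deterministic constant, $\covar(\Xshift) = \covar(X)$ and likewise $\covar(\Ushift) = \covar(U)$, so the trace part $\trace(Q_2\covar(X) + R_2\covar(U))$ is unchanged and carries over verbatim. For the first line I would expand $X^T Q_1 X = (\Xshift + \xeq)^T Q_1 (\Xshift + \xeq) = \Xshift^T Q_1 \Xshift + 2\xeq^T Q_1 \Xshift + \xeq^T Q_1 \xeq$, using symmetry of $Q_1$; similarly for the $U$ term with $R_1$. The linear terms give $s^T X = s^T \Xshift + s^T \xeq$ and $v^T U = v^T \Ushift + v^T \ueq$. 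Collecting the coefficient of $\Xshift$ yields $2Q_1\xeq + s = \sshift$, the coefficient of $\Ushift$ yields $2R_1\ueq + v = \vshift$, and all the purely constant leftovers bundle into $\cshift = \xeq^T Q_1 \xeq + \ueq^T R_1 \ueq + s^T\xeq + v^T\ueq + c$, exactly as defined. Taking expectations throughout (expectation is linear and commutes with the deterministic shifts) gives the stated expression for $\Ell(X(k),U(k))$ in terms of $\Xshift(k),\Ushift(k)$, and summing over $k = 0,\dots,N-1$ reproduces the reformulated objective.

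Finally I would note that the transformation $\Ushift\args = U\args - \ueq$ is a bijection between admissible controls for the original problem and admissible controls for the shifted problem: the causality/adaptedness constraint is preserved because $\ueq$ is a deterministic constant, and $\F_k = \sigma(X(0),\dots,X(k)) = \sigma(\Xshift(0),\dots,\Xshift(k))$, so $\Ushift(k)$ is $\F_k$-measurable iff $U(k)$ is. Hence the minimization over $\Ushift\args$ is equivalent to the original minimization over $U\args$, and the objective values agree for corresponding controls, which completes the reformulation. I do not expect any genuine obstacle here; the only thing to be careful about is bookkeeping — making sure the symmetry of $Q_1$ and $R_1$ is invoked when splitting the cross terms, and that the $-z$ in the shifted dynamics is correctly attributed to the definition of $f$ rather than an error.
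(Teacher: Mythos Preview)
Your proposal is correct and follows essentially the same route as the paper: verify by direct substitution that the shifted dynamics reduce to $A\Xshift(k)+B\Ushift(k)+E\Wshift(k)$ via the steady-state identity, then expand the stage cost in the shifted variables, using symmetry of $Q_1,R_1$ and shift-invariance of the covariance to recover the stated $\sshift,\vshift,\cshift$. Your additional remark that $U\mapsto \Ushift$ is a bijection preserving $\F_k$-measurability is a welcome point the paper leaves implicit.
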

  \begin{proof}
    Since $\Xshift(0) = X(0) - \xeq$ by the definition of the initial condition, we can compute that for every $X(k) \in \Lp[2]{\R^n}$ and $U(k) \in \Lp[2]{\R^l}$ it holds that 
    \begin{equation*}
      \begin{split}
         f(\Xshift(k),\Ushift(k),\Wshift(k)) &-z = A\Xshift(k) + B\Ushift(k) + E\Wshift(k) +z -z\\
        &= A(\Xshift(k) - \xeq) + B(\Ushift(k) - \ueq) + E(W(k)-\mu_W) + z - z \\
        &= (AX(k) + BU(k) + EW(k) + z) - (A\xeq  +B\ueq +E\mu_W +z) \\
        &= X(k+1) - \xeq = \Xshift(k+1) .
      \end{split}
    \end{equation*}
    Thus, the original stage costs in terms of the new variables $(\Xshift\args,\Ushift\args)$ are given by 
    \begin{equation*}
      \begin{split}
        \ell(X(k),U(k)) =& \Exp{(\Xshift(k) + \xeq)^T Q_1 (\Xshift(k) + \xeq) + (\Ushift(k) + \ueq)^T R_1 (\Ushift(k) + \ueq)} \\ &+ \Exp{r^T (\Xshift(k) + \xeq) + v^T (\Ushift(k) + \ueq) + c} \\
        &+ \trace \big( Q_2 \covar(\Xshift(k) + \xeq) + R_2 \covar(\Ushift(k) + \ueq) \big) \\
        =& \Exp{\Xshift(k)^T Q_1 \Xshift(k) + \Ushift(k)^T R_1 \Ushift(k) + \sshift^T \Xshift(k)+ \vshift^T \Ushift(k) + \cshift} \\
        &+ \trace \big( Q_2 \covar(\Xshift(k)) + R_2 \covar(\Ushift(k)) \big),
      \end{split}
    \end{equation*} 
    which proves the claim.
  \end{proof}

  \begin{remark}
      Note that the assumption of the existence of a solution to the equation $(I-A)\xeq - B\ueq = E\mu_w + z$ in Lemma~\ref{lem:ReformulationStagecost} is needed, since there may be certain choices of $\mu_w \in \R$ and $z \in \R$ for which there is no solution if we only assume $(A,B)$ to be stabilizable. However, if instead we assume the stronger condition $(A,B)$ controllable, then this additional assumption can be omitted.
  \end{remark}

  The contribution of Lemma \ref{lem:PushShiftToCost} is that we have moved the constant terms $E \mu_W + z$ from the system dynamics to the stage cost
  \begin{equation} \label{eq:ShiftedStagecosts}
    \begin{split}
      \hat{\ell}(\Xshift,\Ushift) :=& \Exp{\Xshift^T Q_1 \Xshift + \Ushift^T R_1 \Ushift + \sshift^T \Xshift+ \vshift^T \Ushift + \cshift} \\
      &+ \trace \big( Q_2 \covar(\Xshift) + R_2 \covar(\Ushift) \big).
    \end{split}
  \end{equation}
  Thus, we only have to handle the constant terms in the stage costs while the system now reads 
  \begin{equation} \label{eq:StochSystemZeroMean}
    \Xshift(k+1) = A \Xshift(k) + B \Ushift(k) + E \Wshift(k),
  \end{equation}
  where $\Wshift(k)$ has zero mean for all $k \in \N_0$.
  For this reason, we will first construct a storage function for the stochastic OCP with stage costs $\hat{\ell}(\Xshift,\Ushift)$ from \eqref{eq:ShiftedStagecosts} and state process $\Xshift\args$ from system \eqref{eq:StochSystemZeroMean}. Then, in Theorem \ref{thm:StochDissi}, we will use Lemma \ref{lem:PushShiftToCost} to conclude that the original problem is strictly dissipative in the $L^2$ sense.
  The next lemma gives us a reformulation of the stage costs $\hat{\ell}(\Xshift,\Ushift)$ especially concerning the quadratic parts of the stage costs.

  \begin{lemma} \label{lem:ReformulationStagecost}
    Let $P$ be the positive semidefinite solution of the discrete-time algebraic Riccati equation 
    \begin{equation} \label{eq:algRiccati}
      P = A^T P A + (Q_1 + Q_2) - A^T P B [ (R_1 + R_2) + B^T P B ]^{-1} B^T P A,
    \end{equation}
    and set $\RF := -\left[(R_1 + R_2) + B^T P B\right]^{-1} B^T P A$.
    Then for every $k \in \N_0$ and any pair $(X(k),U(k)) \in \Lp[2]{\R^n} \times \Lp[2]{\R^l}$ the identity
    \begin{equation}
        \begin{split}
            \hat{\ell}(\Xshift(k),\Ushift(k)) = & \Exp{\Vert \Ushift(k) - \RF \Xshift(k) \Vert_{\tilde{R}}^2} + \Exp{\Vert E\Wshift(k) \Vert_P^2} + \Exp{\Vert \Xshift(k) \Vert_P^2} \\
            &- \Exp{\Vert \Xshift(k+1) \Vert_P^2} + \Exp{\sshift^T \Xshift(k)+ \vshift^T \Ushift(k) + \cshift} \\
            & - \Exp{\Xshift(k)}^T Q_2 \Exp{\Xshift(k)} - \Exp{\Ushift(k)}^T R_2 \Exp{\Ushift(k)}
        \end{split}
    \label{eq:lhat}
    \end{equation}
    holds with $\tilde{R} := (R_1 + R_2) + B^TPB$ symmetric and positive definite.
  \end{lemma}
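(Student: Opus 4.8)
The plan is to establish the identity \eqref{eq:lhat} by a completion-of-squares argument driven by the discrete-time algebraic Riccati equation \eqref{eq:algRiccati}, performing the algebra pathwise (i.e.\ for the realizations of the random variables) and invoking probability only at the very end to deal with the disturbance. As a first step I would use the elementary identity \eqref{eq:SecondMoment}, applied to $Q_2$ and $R_2$, to absorb the covariance penalties into expectations of quadratic forms: writing $\trace\big(Q_2\covar(\Xshift(k))\big) = \Exp{\Xshift(k)^T Q_2\Xshift(k)} - \Exp{\Xshift(k)}^T Q_2\Exp{\Xshift(k)}$ and analogously for $R_2$, the shifted stage cost \eqref{eq:ShiftedStagecosts} becomes
\begin{equation*}
  \hat\ell(\Xshift(k),\Ushift(k)) = \Exp{\Xshift(k)^T(Q_1+Q_2)\Xshift(k) + \Ushift(k)^T(R_1+R_2)\Ushift(k) + \sshift^T\Xshift(k) + \vshift^T\Ushift(k) + \cshift} - \Exp{\Xshift(k)}^T Q_2\Exp{\Xshift(k)} - \Exp{\Ushift(k)}^T R_2\Exp{\Ushift(k)},
\end{equation*}
which already produces the affine term and the two mean-correction terms appearing in \eqref{eq:lhat}; it then only remains to rewrite the quadratic form $\Xshift^T(Q_1+Q_2)\Xshift + \Ushift^T(R_1+R_2)\Ushift$.

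Next I would carry out the deterministic completion of squares at the level of the realizations. With $\tilde R := (R_1+R_2)+B^TPB$, which is symmetric and positive definite because $R_1>0$, $R_2\geq 0$ and $P\geq 0$ (this in particular makes $\RF=-\tilde R^{-1}B^TPA$ and \eqref{eq:algRiccati} well defined), a direct expansion of $\norm{\Ushift-\RF\Xshift}_{\tilde R}^2$ and $\norm{A\Xshift+B\Ushift}_P^2$, combined with $\tilde R\RF=-B^TPA$ and with \eqref{eq:algRiccati} rewritten as $A^TPB\tilde R^{-1}B^TPA = A^TPA+(Q_1+Q_2)-P$, yields the pointwise identity
\begin{equation*}
  \Xshift^T(Q_1+Q_2)\Xshift + \Ushift^T(R_1+R_2)\Ushift = \norm{\Ushift-\RF\Xshift}_{\tilde R}^2 + \norm{\Xshift}_P^2 - \norm{A\Xshift+B\Ushift}_P^2 .
\end{equation*}

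Finally I would take expectations and use the zero-mean dynamics \eqref{eq:StochSystemZeroMean} in the form $A\Xshift(k)+B\Ushift(k) = \Xshift(k+1)-E\Wshift(k)$, expanding $\norm{\Xshift(k+1)-E\Wshift(k)}_P^2 = \norm{\Xshift(k+1)}_P^2 - 2\Xshift(k+1)^TPE\Wshift(k) + \norm{E\Wshift(k)}_P^2$. The only genuinely stochastic point — and the step I expect to require the most care — is the evaluation of the cross term $\Exp{\Xshift(k+1)^TPE\Wshift(k)}$: writing $\Xshift(k+1)=A\Xshift(k)+B\Ushift(k)+E\Wshift(k)$ and using that $A\Xshift(k)+B\Ushift(k)$ is $\F_k$-measurable (by the causality/non-anticipativity of $\Ushift\args$) while $\Wshift(k)$ is independent of $\F_k$ with $\Exp{\Wshift(k)}=0$, the term $\Exp{(A\Xshift(k)+B\Ushift(k))^TPE\Wshift(k)}$ vanishes, so $\Exp{\Xshift(k+1)^TPE\Wshift(k)} = \Exp{\Wshift(k)^TE^TPE\Wshift(k)} = \Exp{\norm{E\Wshift(k)}_P^2}$ and hence $\Exp{\norm{A\Xshift(k)+B\Ushift(k)}_P^2} = \Exp{\norm{\Xshift(k+1)}_P^2} - \Exp{\norm{E\Wshift(k)}_P^2}$. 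Substituting this back into the expectation of the pointwise identity and combining with the affine and mean terms collected in the first step reproduces \eqref{eq:lhat} exactly. Apart from this cross-term argument, which relies precisely on the fact that the control does not anticipate $W(k)$, the computation is routine linear algebra once $P$ is at hand; the remaining care is only in bookkeeping the several expectation and trace terms.
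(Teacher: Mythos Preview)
Your proposal is correct and follows essentially the same route as the paper: first apply \eqref{eq:SecondMoment} to absorb the covariance terms and isolate the mean corrections, then rewrite the quadratic part $\Exp{\Xshift^T(Q_1+Q_2)\Xshift+\Ushift^T(R_1+R_2)\Ushift}$ via the Riccati-based completion of squares, using the independence and zero mean of $\Wshift(k)$ to handle the noise cross term. The only difference is cosmetic: the paper invokes \cite[Chapter~8, Lemma~6.1]{Aastroem1970} for the completion-of-squares identity \eqref{eq:DissiStrategy}, whereas you carry out that computation explicitly.
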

  \begin{proof}
    Using equation \eqref{eq:SecondMoment} we can rewrite the stage costs \eqref{eq:stochStagecost} as 
    \begin{equation*}
      \begin{split}
        \hat{\ell}(\Xshift(k),\Ushift(k)) =& \Exp{\Xshift(k)^T Q_1 \Xshift(k) + \Ushift(k)^T R_1 \Ushift(k) + \sshift^T \Xshift(k)+ \vshift^T \Ushift(k) + \cshift} \\
        &+ \Exp{\Xshift(k)^T Q_2 \Xshift(k)} - \Exp{\Xshift(k)}^T Q_2 \Exp{\Xshift(k)} \\
        &+ \Exp{\Ushift(k)^T R_2 \Ushift(k)} - \Exp{\Ushift(k)}^T R_2 \Exp{\Ushift(k)} \\
        =& \Exp{\Xshift(k)^T (Q_1+Q_2) \Xshift(k) + \Ushift(k)^T (R_1 + R_2) \Ushift(k)} \\ 
        &+ \Exp{\sshift^T \Xshift(k)+ \vshift^T \Ushift(k) + \cshift} - \Exp{\Xshift(k)}^T Q_2 \Exp{\Xshift(k)} \\
        & - \Exp{\Ushift(k)}^T R_2 \Exp{\Ushift(k)}
      \end{split}
    \end{equation*}
    for all $k \in \N_0$ and $(X(k),U(k)) \in \Lp[2]{\R^n} \times \Lp[2]{\R^l}$. Further, the dynamics of $\Xshift\args$ are given by system \eqref{eq:StochSystemZeroMean}, where $\Wshift(k)$ has zero mean and is independent of $\Xshift(k)$ and $\Ushift(k)$ for all $k \in \N_0$. 
    And since $P$ is the solution of the algebraic Riccati equation \eqref{eq:algRiccati} it is also a steady-state solution of the Riccati difference equation
    \begin{equation}
        \begin{split}
            P_N(k) =& A^T P_N(k+1) A + (Q_1 + Q_2) - A^T P_N(k+1)B \\ 
            &\times [ (R_1 + R_2) + B^T P_N(k+1) B ]^{-1} B^T P_N(k+1)A
        \end{split}
    \end{equation}
    with terminal condition $P_N(N) = P$. Hence by \cite[Chapter~8, Lemma~6.1]{Aastroem1970} we conclude that 
    \begin{equation} \label{eq:DissiStrategy}
      \begin{split}
        &\Exp{\Xshift(k)^T (Q_1+Q_2) \Xshift(k) + \Ushift(k)^T (R_1 + R_2) \Ushift(k)} \\
        &= \Exp{\Vert \Ushift(k) - \RF \Xshift(k) \Vert_{\tilde{R}}^2} + \Exp{\Vert E\Wshift(k) \Vert_P^2} \\
        &\quad + \Exp{\Vert \Xshift(k) \Vert_P^2} - \Exp{\Vert \Xshift(k+1) \Vert_P^2},
      \end{split}
    \end{equation}
    which proves the lemma.
  \end{proof}

  Equation \eqref{eq:lhat} gives us a form of the stage cost that is suitable for the construction of the storage function for our stochastic linear-quadratic problem, starting with the following lemma.

  \begin{lemma} \label{lem:StagecostShiftedArguments}
    Assume that $(A,Q^{1/2})$ is detectable. Then there exists an invariant distribution $\hat{\varrho}_X^s \in \mathcal{P}_2(\R^n)$ and an initial condition $\Xshift^s(0) = \Xshift^s_0 \sim \hat{\varrho}_X^s$ such that the state and control processes defined by
    \begin{align*}
      \Xshift^s(k+1) &= A \Xshift^s(k) + B \Ushift^s(k) + E\Wshift(k) \\
      \Ushift^s\args &= K \Xshift^s\args
    \end{align*}
    are a stationary pair according to Definition \ref{defn:StationaryPair} and $\E[\Xshift^s(k)] = 0$ holds for all $k \in \N_0$.
    Moreover, for every $k \in \N_0$ and $(X(k),U(k)) \in \Lp[2]{\R^n} \times \Lp[2]{\R^l}$, the identity 
    \begin{equation} \label{eq:FirstModifiCost}
        \begin{split}
            &\hat{\ell}(\Xshift(k),\Ushift(k)) - \hat{\ell}(\Xshift^s(k),\Ushift^s(k)) + \hat{\lambda}(k,\Xshift(k)) - \hat{\lambda}(k+1,\Xshift(k+1)) \\
            &= \hat{\ell}(\Xshift(k)-\Xshift^s(k),\Ushift(k)-\Ushift^s(k)) + \hat{c}
        \end{split}
    \end{equation}
    holds with $\hat{c}$ from Lemma~\ref{lem:ReformulationStagecost} and $\hat{\lambda}(k,\Xshift(k))$ defined as
    \begin{equation*}
        \hat{\lambda}(k,\Xshift(k)) = \Exp{\Vert \Xshift(k) - \Xshift^s(k) \Vert_P^2 - \Vert \Xshift(k) \Vert_P^2}. 
    \end{equation*}
  \end{lemma}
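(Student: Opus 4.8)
The plan is to prove the two assertions of the lemma in turn: first the existence of the stationary pair $(\Xshift^s\args,\Ushift^s\args)$, then the cost identity \eqref{eq:FirstModifiCost}.

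For the existence part, observe that with $\Ushift^s\args=\RF\Xshift^s\args$ the closed-loop recursion becomes $\Xshift^s(k+1)=A_{\RF}\Xshift^s(k)+E\Wshift(k)$ with $A_{\RF}:=A+B\RF$. Since $\RF$ is the feedback gain belonging to the positive semidefinite stabilizing solution $P$ of the Riccati equation \eqref{eq:algRiccati}, standard discrete-time LQR theory gives that $A_{\RF}$ is Schur stable. The key step is then to consider the map $T$ on $\mathcal{P}_2(\R^n)$ sending a law $\varrho$ to the law of $A_{\RF}\xi+E\eta$, with $\xi\sim\varrho$ and $\eta\sim\varrho_W$ independent; using the optimal coupling one obtains $W_2(T^j\varrho_1,T^j\varrho_2)\le\Vert A_{\RF}^j\Vert\,W_2(\varrho_1,\varrho_2)$ for all $j$, and since $\Vert A_{\RF}^j\Vert\to0$ geometrically, completeness of $(\mathcal{P}_2(\R^n),W_2)$ yields a unique fixed point $\hat{\varrho}_X^s$. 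Choosing $\Xshift^s(0)\sim\hat{\varrho}_X^s$ independent of $\{\Wshift(k)\}_{k\in\N_0}$ and propagating the recursion, an induction gives $\Xshift^s(k)\sim\hat{\varrho}_X^s$ for all $k$, so $(\Xshift^s\args,\Ushift^s\args)$ is a stationary pair in the sense of Definition~\ref{defn:StationaryPair} (stationarity of the law of $\Ushift^s\args$ being automatic, cf.\ Remark~\ref{rem:FeedbackStationarity}). Finally $\mu^s:=\E[\Xshift^s(k)]$ satisfies $\mu^s=A_{\RF}\mu^s$ because $\Wshift$ has zero mean, so $\mu^s=0$ as $I-A_{\RF}$ is invertible, and hence also $\E[\Ushift^s(k)]=\RF\mu^s=0$.

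For the cost identity, the idea is to apply the reformulation \eqref{eq:lhat} of Lemma~\ref{lem:ReformulationStagecost} to three pairs --- $(\Xshift\args,\Ushift\args)$, the stationary pair $(\Xshift^s\args,\Ushift^s\args)$, and the difference process $(\Xshift\args-\Xshift^s\args,\Ushift\args-\Ushift^s\args)$ --- and to combine the three expressions. The bookkeeping is controlled by four facts. (i) $\Ushift^s-\RF\Xshift^s=0$, so the $\tilde R$-weighted term vanishes for the stationary pair and, more importantly, the difference process obeys the \emph{noise-free} recursion $(\Xshift-\Xshift^s)(k+1)=A(\Xshift-\Xshift^s)(k)+B(\Ushift-\Ushift^s)(k)$, so that \eqref{eq:lhat} applied to it contains no $\Exp{\Vert E\Wshift(k)\Vert_P^2}$ term. (ii) $\E[\Xshift^s(k)]=0$ and $\E[\Ushift^s(k)]=0$, so the linear terms and the mean-quadratic $Q_2,R_2$ corrections disappear from the expansion of $\hat\ell(\Xshift^s(k),\Ushift^s(k))$. (iii) $\Xshift^s(k)$ and $\Xshift^s(k+1)$ share the law $\hat{\varrho}_X^s$, hence $\Exp{\Vert\Xshift^s(k)\Vert_P^2}=\Exp{\Vert\Xshift^s(k+1)\Vert_P^2}$ and the $P$-telescoping for the stationary pair collapses; together with (i) and (ii) this reduces $\hat\ell(\Xshift^s(k),\Ushift^s(k))$ to $\Exp{\Vert E\Wshift(k)\Vert_P^2}+\cshift$. (iv) $\Xshift(k+1)$ and $\Xshift^s(k+1)$ are driven by the \emph{same} realization $\Wshift(k)$, so the two $\Exp{\Vert E\Wshift(k)\Vert_P^2}$ contributions cancel in $\hat\ell(\Xshift,\Ushift)-\hat\ell(\Xshift^s,\Ushift^s)$, while the cross terms vanish since $\Wshift(k)$ is zero-mean and independent of $(\Xshift(k),\Ushift(k))$ and of $(\Xshift^s(k),\Ushift^s(k))$.

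After these reductions the only surviving $P$-weighted state quadratics are $\Exp{\Vert\Xshift(k)\Vert_P^2}-\Exp{\Vert\Xshift(k+1)\Vert_P^2}$ and the corresponding expressions for the difference process, and these recombine exactly into $\hat\lambda(k+1,\Xshift(k+1))-\hat\lambda(k,\Xshift(k))$ by the definition of $\hat\lambda$; the remaining $\tilde R$-weighted term together with the linear and mean-quadratic terms reassemble --- via \eqref{eq:lhat} read backwards for the noise-free difference process --- into $\hat\ell((\Xshift-\Xshift^s)(k),(\Ushift-\Ushift^s)(k))$, the additive constant being tracked through the reduced form $\hat\ell(\Xshift^s(k),\Ushift^s(k))=\Exp{\Vert E\Wshift(k)\Vert_P^2}+\cshift$. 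This yields \eqref{eq:FirstModifiCost}. I expect two steps to require genuine care. The first is the existence statement in the non-Gaussian case: one cannot simply prescribe the first two moments, so the Wasserstein-contraction argument --- together with the measure-theoretic point that the probability space carries a copy of $\hat{\varrho}_X^s$ independent of the noise --- is what makes the construction rigorous, and this is the new ingredient beyond the Gaussian setting of \cite{CDCPaper}. The second is the exact tracking of the noise- and constant-dependent terms in the threefold use of \eqref{eq:lhat}: it is essential that $\Xshift$ and $\Xshift^s$ be driven by the \emph{same} noise realization, so that the stochastic cross terms cancel identically rather than merely in distribution --- which is precisely where the pathwise, $L^2$ character of Definition~\ref{defn:StochDissi} is exploited, in contrast with a purely distributional dissipativity notion.
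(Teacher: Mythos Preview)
Your proposal is correct, and the computation for the cost identity \eqref{eq:FirstModifiCost} is essentially the paper's argument organized differently. The paper introduces the auxiliary function $\bar{\ell}(\Xshift,\Ushift):=\hat{\ell}(\Xshift,\Ushift)-\Exp{\Vert E\Wshift\Vert_P^2}-\Exp{\Vert\Xshift\Vert_P^2}+\Exp{\Vert A\Xshift+B\Ushift+E\Wshift\Vert_P^2}$ and shows directly that $\bar{\ell}(\Xshift,\Ushift)=\bar{\ell}(\Xshift-\Xshift^s,\Ushift-\Ushift^s)$ via the observation $\Ushift-K\Xshift=(\Ushift-\Ushift^s)-K(\Xshift-\Xshift^s)$, then unpacks $\bar{\ell}$ on the right using independence of $\Wshift(k)$ from the difference process; you instead apply \eqref{eq:lhat} to the three pairs separately and match terms. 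The algebra is identical, only the bookkeeping differs. Your tracking of the additive constant $\cshift$ in $\hat\ell(\Xshift^s,\Ushift^s)=\Exp{\Vert E\Wshift(k)\Vert_P^2}+\cshift$ is in fact more careful than the paper's closing line, which writes $\hat{\ell}(\Xshift^s(k),\Ushift^s(k))=\Exp{\Vert E\Wshift(k)\Vert_P^2}$ and drops it.

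The genuine difference is in the existence step. The paper dispatches it in one sentence by citing \cite[Section~10.5.4]{Meyn1993} for the existence of an invariant distribution (in $\mathcal P_2$) of a Schur-stable linear recursion driven by second-moment noise. Your Wasserstein-contraction argument --- bounding $W_2(T^j\varrho_1,T^j\varrho_2)\le\Vert A_{\RF}^j\Vert\, W_2(\varrho_1,\varrho_2)$ via the synchronous coupling and invoking completeness of $(\mathcal P_2(\R^n),W_2)$ --- is a self-contained alternative that avoids the Markov-chain machinery and makes finiteness of the second moment automatic. It also makes explicit the measure-theoretic requirement you flag, namely that $(\Omega,\F,\Prob)$ be rich enough to carry an independent copy of $\hat\varrho_X^s$, a point the paper leaves implicit.
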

  \begin{proof}
    Since $(A,Q^{1/2})$ is detectable, $\RF$ is a stabilizing feedback and thus $A+B\RF$ is Schur-stable. Hence, since the first two moments of $\Wshift(k)$ are finite for all $k \in \N_0$, the stationarity of $\Xshift^s\args$ follows by \cite[Section~10.5.4]{Meyn1993} and the stationarity of $\Ushift^s\args$ by its feedback form, cf. Remark~\ref{rem:FeedbackStationarity}.
    Additionally, \cite[Section~10.5.4]{Meyn1993} ensures that $\Xshift^s(k) \in \Lp[2]{\R^n}$ holds for all $k \in \N_0$ which implies $\hat{\varrho}_X^s \in \mathcal{P}_2(\R^n)$.
    Further, since $\E[\Wshift(k)] = 0$, it follows directly that $\E[\Xshift^s(k)] = 0$ for all $k \in \N_0$ since it is the only finite solution of the steady state equation $\mu_{\Xshift}^s = (A+B\RF) \mu_{\Xshift}^s$ for the expectation $\mu_{\Xshift}^s$ of $\Xshift^s\args$.
    To conclude \eqref{eq:FirstModifiCost}, we first observe that 
        \begin{equation} \label{eq:invarianceQterm}
            \begin{split}
                \Exp{\Vert \Ushift(k) - \RF \Xshift(k) \Vert_{\widetilde{R}}^2} &= \Exp{\Vert \Ushift(k) -\RF \Xshift^s(k) + \RF \Xshift^s(k) - \RF \Xshift(k)  \Vert^2_{\widetilde{R}}} \\
                &= \Exp{\Vert \left( \Ushift(k) - \Ushift^s(k) \right) - \RF \left(\Xshift(k) - \Xshift^s(k) \right) \Vert_{\widetilde{R}}^2}
            \end{split}
        \end{equation}
        holds. Let us define
        \begin{equation*}
          \begin{split}
            \bar{\ell}(\Xshift(k),\Ushift(k)) :=& \hat{\ell}(\Xshift(k),\Ushift(k)) - \Exp{\Vert E\Wshift(k) \Vert_P^2} - \Exp{\Vert \Xshift(k) \Vert_P^2} \\
            &+ \Exp{\Vert A\Xshift(k) + B\Ushift(k) + E\Wshift(k) \Vert_P^2}.
          \end{split}
        \end{equation*}
        Then, since $\Xshift^s(k)$ has zero mean for all $k \in \N_0$, it follows by Lemma \ref{lem:ReformulationStagecost} and equation \eqref{eq:invarianceQterm} that 
        \begin{equation} \label{eq:TermDissi1}
          \begin{split}
            \bar{\ell}(\Xshift(k),\Ushift(k)) =& \bar{\ell}(\Xshift(k)-\Xshift^s(k),\Ushift(k)-\Ushift^s(k)) \\
            =& \hat{\ell}(\Xshift(k)-\Xshift^s(k),\Ushift(k)-\Ushift^s(k)) \\
            &- \Exp{\Vert E\Wshift(k) \Vert_P^2} - \Exp{\Vert \Xshift(k)-\Xshift^s(k)\Vert_P^2} \\
            &+ \Exp{\Vert A(\Xshift(k)-\Xshift^s(k)) + B(\Ushift(k)-\Ushift^s(k)) + E\Wshift(k) \Vert_P^2}.
          \end{split}
        \end{equation}
        Moreover, because $(\Xshift(k),\Ushift(k))$ as well as $(\Xshift^s(k),\Ushift^s(k))$ is stochastically independent of $\Wshift(k)$ we get 
        \begin{equation}
          \begin{split}
            &\Exp{\Vert A(\Xshift(k)-\Xshift^s(k)) + B(\Ushift(k)-\Ushift^s(k)) + E\Wshift(k) \Vert_P^2} \\
            &= \Exp{\Vert (A\Xshift(k) + B\Ushift(k))-(A\Xshift^s(k) + B\Ushift^s(k)) \Vert_P^2} + \Exp{\Vert E\Wshift(k) \Vert_P^2} \\
            &= \Exp{\Vert \Xshift(k+1)-\Xshift^s(k+1)\Vert_P^2} + \Exp{\Vert E\Wshift(k) \Vert_P^2}.
          \end{split}
        \end{equation}
        Thus, after resolving equation \eqref{eq:TermDissi1} we get 
        \begin{equation*}
          \begin{split}
            &\hat{\ell}(\Xshift(k),\Ushift(k)) - \Exp{\Vert E\Wshift(k) \Vert_P^2} + \Exp{\Vert \Xshift(k)-\Xshift^s(k)\Vert_P^2} \\
            &- \Exp{\Vert \Xshift(k) \Vert_P^2} - \Exp{\Vert \Xshift(k+1)-\Xshift^s(k+1)\Vert_P^2} + \Exp{\Vert \Xshift(k+1) \Vert_P^2} \\
            &= \hat{\ell}(\Xshift(k),\Ushift(k)) - \Exp{\Vert E\Wshift(k) \Vert_P^2} + \hat{\lambda}(k,\Xshift(k)) - \hat{\lambda}(k+1,\Xshift(k+1)) \\
            &= \hat{\ell}(\Xshift(k)-\Xshift^s(k),\Ushift(k)-\Ushift^s(k)).
          \end{split}
        \end{equation*} 
        Moreover, by evaluating equation \eqref{eq:lhat} at $(\Xshift^s(k),\Ushift^s(k))$ and using that $\Xshift^s(k)$ has zero mean, we obtain
        \begin{equation*}
            \begin{split}
                \hat{\ell}(\Xshift^s(k),\Ushift^s(k)) =& \Exp{\Vert \Ushift^s(k) - \RF \Xshift^s(k) \Vert_{\tilde{R}}^2} + \Exp{\Vert E\Wshift(k) \Vert_P^2} \\ &+ \Exp{\Vert \Xshift^s(k) \Vert_P^2} - \Exp{\Vert \Xshift^s(k+1) \Vert_P^2} + \cshift \\
                =& \Exp{\Vert E\Wshift(k) \Vert_P^2} + \cshift
            \end{split}
        \end{equation*}
        since $\Ushift^s(k) = K\Xshift^s(k)$ and $\Exp{\Vert \Xshift^s(k) \Vert_P^2} = \Exp{\Vert \Xshift^s(k+1) \Vert_P^2}$ because of the stationarity of $\Xshift^s\args$. This shows equation \eqref{eq:FirstModifiCost} and, thus, proves the lemma.
  \end{proof}

  In Lemma \ref{lem:StagecostShiftedArguments}, we have shown that we can modify the stage costs in a suitable way to obtain a reformulation in terms of shifted arguments. The next lemma shows how we can modify the costs further to derive a dissipativity inequality for the stage costs in the form of \eqref{eq:ShiftedStagecosts}.

  \begin{lemma} \label{lem:DissiStochDet}
    Assume that $(A,Q_1^{1/2})$ is detectable.
    Further, we define the dynamics for the error $\Xdist\args := \Xshift\args - \Xshift^s\args$ between the state process $\Xshift\args$ and the stationary process $\Xshift^s\args$ from Lemma \ref{lem:StagecostShiftedArguments} as
    \begin{align*}
      \Xdist(k+1) &= A\Xdist(k) + B\Udist(k) = A (\Xshift(k)-\Xshift^s(k)) + B(\Ushift(k)-\Ushift^s(k))\\
      \Xdist(0) &= \Xdist_0 = X_0 - \xeq - \Xshift^s_0
    \end{align*}
    with $\Udist\args := \Ushift\args - \Ushift^s\args$.
    Then there exists a solution $(\xeqopt,\ueqopt)$ to $(I-A)x - Bu = 0$, a function $\alpha \in \K_{\infty}$, a symmetric, positive definite matrix $S \in \R^{n \times n}$, and a vector $q \in \R^n$, such that for every $k \in \N_0$ and $(X(k),U(k)) \in \Lp[2]{\R^n} \times \Lp[2]{\R^l}$ the inequality
    \begin{equation*}
      \hat{\ell}(\Xdist(k),\Udist(k)) - \hat{\ell}(\xeqopt,\ueqopt) + \tilde{\lambda}(\Xdist(k)) - \tilde{\lambda}(\Xdist(k+1)) \geq \alpha \left( \Exp{ \Vert \Xdist(k) - \xeqopt \Vert^2 } \right)
    \end{equation*}
    holds with $\tilde{\lambda}(\Xdist(k)) := \Exp{\Vert \Xdist(k) \Vert_S^2 + q^T \Xdist(k)}$.
  \end{lemma}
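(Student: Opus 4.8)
The plan is to exploit that the error dynamics $\Xdist(k+1)=A\Xdist(k)+B\Udist(k)$ are \emph{noise-free}, so that $(\Xdist(k),\Udist(k))$ decomposes cleanly into its mean and its covariance and the two parts can be handled separately using one common storage matrix $S$. Writing $m(k):=\E[\Xdist(k)]$, $n(k):=\E[\Udist(k)]$ and letting $\Theta(k)\succeq 0$ be the joint covariance of $(\Xdist(k),\Udist(k))$, identity \eqref{eq:SecondMoment} gives
\begin{equation*}
  \hat\ell(\Xdist(k),\Udist(k)) = g(m(k),n(k)) + \trace\!\Big(\begin{pmatrix}Q_1+Q_2 & 0\\ 0 & R_1+R_2\end{pmatrix}\Theta(k)\Big),
\end{equation*}
with $g(x,u):=x^{T}Q_1x+u^{T}R_1u+\sshift^{T}x+\vshift^{T}u+\cshift$ the deterministic stage cost belonging to \eqref{eq:ShiftedStagecosts}; correspondingly the dynamics split into the deterministic system $m(k+1)=Am(k)+Bn(k)$ and the linear propagation $\covar(\Xdist(k+1))=\begin{pmatrix}A & B\end{pmatrix}\Theta(k)\begin{pmatrix}A & B\end{pmatrix}^{T}$. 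Since $(\xeqopt,\ueqopt)$ is a constant pair one has $\hat\ell(\xeqopt,\ueqopt)=g(\xeqopt,\ueqopt)$, and $\E[\|\Xdist(k)-\xeqopt\|^{2}]=\|m(k)-\xeqopt\|^{2}+\trace(\covar(\Xdist(k)))$.

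For the mean part I would invoke deterministic linear-quadratic dissipativity. Because $(A,B)$ is stabilizable and $(A,Q_1^{1/2})$ is detectable, the quadratic part of $g$ is positive definite on the subspace $\{(x,u):(I-A)x-Bu=0\}$ — detectability, via the PBH test, excludes a nonzero $x$ with $Ax=x$, $Q_1x=0$, which is the only obstruction to coercivity — so $g$ has a unique minimizer $(\xeqopt,\ueqopt)$ over that subspace, in particular a solution of $(I-A)x-Bu=0$. By deterministic LQ dissipativity theory (cf.\ \cite{Gruene2018}) the problem with dynamics $x(k+1)=Ax(k)+Bu(k)$ and stage cost $g$ is then strictly dissipative at $(\xeqopt,\ueqopt)$ with a storage function $\lambda_{\mathrm d}(x):=x^{T}Sx+p^{T}x$, and under the present hypotheses $S$ can be taken symmetric and positive definite. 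Written in the shifted variables $(x-\xeqopt,u-\ueqopt)$, this is equivalent to
\begin{equation*}
  M_{\mathrm d}(S):=\begin{pmatrix}Q_1+S-A^{T}SA & -A^{T}SB\\ -B^{T}SA & R_1-B^{T}SB\end{pmatrix}\succeq c_1\begin{pmatrix}I & 0\\ 0 & 0\end{pmatrix}
\end{equation*}
for some $c_1>0$, together with the (consistent, by optimality of $(\xeqopt,\ueqopt)$ and its KKT multiplier) choice of $p$ that annihilates the affine part of the rotated stage cost; evaluated at $(m(k),n(k))$ this makes the mean contribution $g(m(k),n(k))-g(\xeqopt,\ueqopt)+\lambda_{\mathrm d}(m(k))-\lambda_{\mathrm d}(m(k+1))$ at least $c_1\|m(k)-\xeqopt\|^{2}$.

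For the covariance part I would keep the \emph{same} $S$ and $p$, so that $\tilde\lambda(\Xdist(k))=\lambda_{\mathrm d}(m(k))+\trace(S\covar(\Xdist(k)))$ and the left-hand side of the asserted inequality becomes the mean contribution above plus
\begin{equation*}
  \trace\!\Big(\begin{pmatrix}Q_1+Q_2 & 0\\ 0 & R_1+R_2\end{pmatrix}\Theta(k)\Big)+\trace(S\covar(\Xdist(k)))-\trace(S\covar(\Xdist(k+1)))=\trace\big(M_{\mathrm c}(S)\,\Theta(k)\big),
\end{equation*}
where $M_{\mathrm c}(S):=\begin{pmatrix}Q_1+Q_2+S-A^{T}SA & -A^{T}SB\\ -B^{T}SA & R_1+R_2-B^{T}SB\end{pmatrix}=M_{\mathrm d}(S)+\begin{pmatrix}Q_2 & 0\\ 0 & R_2\end{pmatrix}$. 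Since $M_{\mathrm c}(S)-c_1\big(\begin{smallmatrix}I&0\\0&0\end{smallmatrix}\big)=\big(M_{\mathrm d}(S)-c_1\big(\begin{smallmatrix}I&0\\0&0\end{smallmatrix}\big)\big)+\big(\begin{smallmatrix}Q_2&0\\0&R_2\end{smallmatrix}\big)\succeq0$ and $\Theta(k)\succeq0$, this covariance term is $\geq c_1\trace(\covar(\Xdist(k)))$. Adding the two estimates yields $c_1\big(\|m(k)-\xeqopt\|^{2}+\trace(\covar(\Xdist(k)))\big)=c_1\,\E[\|\Xdist(k)-\xeqopt\|^{2}]$, so the claim follows with $\alpha(r):=c_1 r\in\K_\infty$.

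I expect the main obstacle to be the mean step, and more precisely extracting a \emph{positive definite} storage matrix $S$: the storage produced by the standard LQ construction (from the stabilizing Riccati solution, i.e.\ the available storage) is in general only negative semidefinite or indefinite, so $S>0$ must be arranged separately — for instance by keeping $S$ small on the unstable directions (which, by detectability, are $Q_1$-observable, so $Q_1$ already dominates $A^{T}SA$ there and $R_1-B^{T}SB\succeq0$ is preserved) while supplying a Lyapunov-type contribution on the asymptotically stable ones. The affine bookkeeping fixing $p$, and checking that the deterministic LQ problem indeed admits an optimal steady state, are by comparison routine.
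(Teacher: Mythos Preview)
Your proposal is correct and follows essentially the same route as the paper: both lift the deterministic LQ dissipativity result of \cite{Gruene2018} to the stochastic setting and then exploit $Q_2,R_2\succeq0$ to absorb the remaining variance terms; the paper merely applies the deterministic inequality pointwise and takes expectations rather than splitting mean and covariance upfront. Your identified obstacle---obtaining $S\succ0$---is resolved there exactly along the lines you sketch, by taking $S=\gamma\tilde S$ with $\tilde S\succ0$ supplied by \cite[Lemma~5.4]{Gruene2018} (detectability gives $Q_1+\tilde S-A^{T}\tilde SA\succ0$) and $\gamma>0$ small enough that a Schur-complement argument makes the full block matrix $M_{\mathrm d}(S)$ positive definite, after which \cite[Proposition~4.3]{Damm2014} delivers $p$ and the optimal steady state $(\xeqopt,\ueqopt)$.
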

  \begin{proof}
    Since $(A,Q_1^{1/2})$ is detectable, we know by \cite[Lemma 5.4]{Gruene2018} that there is $\tilde{S} \in \R^{n \times n}$ symmetric and positive definite satisfying the matrix inequality 
    \[Q_1 + \tilde{S} - A^T\tilde{S}A > 0.\]
    For a given $\gamma \in (0,1]$, set $\widetilde{S}_{\gamma} := \gamma \widetilde{S}$ and $ Q_{\gamma} := Q_1 + \widetilde{S}_{\gamma} - A^T \widetilde{S}_{\gamma}A$.
    Then following the calculation of \cite[Lemma 4.1]{Gruene2018} we get that
    \begin{equation*}
        \begin{split}
            \hat{\ell}_{\gamma}(\Xdist(k),\Udist(k)) :=& \hat{\ell}(\Xdist(k),\Udist(k)) + \Exp{\Vert \Xdist(k) \Vert_{\widetilde{S}_{\gamma}}^2} - \Exp{\Vert \Xdist(k+1) \Vert_{\widetilde{S}_{\gamma}}^2} \\
            =& \Exp{\Vert (\Xdist(k), \Udist(k)) \Vert_H^2} + \Exp{ \sshift^T \Xdist(k) + \vshift^T \Udist(k) + \cshift} \\
            &+ \trace \big( Q_2 \covar(\Xdist(k)) + R_2 \covar(\Udist(k)) \big)
        \end{split}
    \end{equation*}
    with
    \begin{equation*}
        H := \dfrac{1}{2}\left( 
        \begin{array}{cc}
            2Q_{\gamma} & \gamma G \\
            \gamma G & 2 R_{\gamma}
        \end{array}
        \right),
    \end{equation*}
    $R_{\gamma} := R_1-B^T\widetilde{S}_{\gamma}B$ and $G := -A^T\widetilde{S}B - B^T\widetilde{S}A$.
    Using the Schur complement, we can show that $H$ is positive definite for a sufficient small $\tilde{\gamma} \in (0,1]$; see \cite[Proof of Lemma 4.1]{Gruene2018}. Thus, we conclude that the function
    \begin{equation}
        h_{\tilde{\gamma}}(x,u) := (x, u)^T H (x,u) +  \sshift^T x + \vshift^T u + \cshift
    \end{equation}
    is strictly convex in $(x,u)$. Then the optimization problem 
    \begin{equation}
        \min_{(x,u) \in \R^n \times \R^l} h_{\tilde{\gamma}}(x,u), \quad \text{s.t.}~x-Ax-Bu=0
    \end{equation}
    admits a unique global solution $(\xeqopt,\ueqopt)$. Applying \cite[Proposition~4.3]{Damm2014} we can deduce the existence of a vector $q \in \R^n$ and a constant $r>0$ such that
    \begin{equation} \label{eq:DissiDet}
        h_{\tilde{\gamma}}(x,u) - h_{\tilde{\gamma}}(\xeqopt,\ueqopt) + q^T x - q^T(Ax+Bu) \geq  r \Vert x-\xeqopt \Vert^2
    \end{equation}
    holds for all $(x,u) \in \R^n \times \R^l$. 
    Further, we observe
    \begin{equation} \label{eq:costEqu}
        \begin{split}
            \hat{\ell}_{\tilde{\gamma}}(\xeqopt,\ueqopt) &= \Exp{h_{\Tilde{\gamma}}(\xeqopt,\ueqopt)} \\
            &= \hat{\ell}(\xeqopt,\ueqopt) + \Exp{\Vert \xeqopt \Vert_{\widetilde{S}_{\tilde{\gamma}}}^2} - \Exp{\Vert A\xeqopt+ B\ueqopt \Vert_{\widetilde{S}_{\tilde{\gamma}}}^2} \\
            &= \hat{\ell}(\xeqopt,\ueqopt) + \Exp{\Vert \xeqopt \Vert_{\widetilde{S}_{\tilde{\gamma}}}^2} - \Exp{\Vert \xeqopt \Vert_{\widetilde{S}_{\tilde{\gamma}}}^2} = \hat{\ell}(\xeqopt,\ueqopt).
        \end{split}
    \end{equation}
    Hence, using the equations \eqref{eq:DissiDet} and \eqref{eq:costEqu} and setting $S := \tilde{S}_{\tilde{\gamma}}$ it follows that 
    \begin{equation*}
        \begin{split}
            &\hat{\ell}(\Xdist(k),\Udist(k)) - \hat{\ell}(\xeqopt,\ueqopt) + \tilde{\lambda}(\Xdist(k)) - \tilde{\lambda}(\Xdist(k+1)) \\
            =& \hat{\ell}_{\tilde{\gamma}}(\Xdist(k),\Udist(k)) - \hat{\ell}(\xeqopt,\ueqopt)  + \Exp{q^T \Xdist(k)} - \Exp{q^T\Xdist(k+1)} \\
            =& \Exp{ h_{\tilde{\gamma}}(\Xdist(k),\Udist(k)) - h_{\tilde{\gamma}}(\xeqopt,\ueqopt) + q^T \Xdist(k) - q^T(A\Xdist(k) + B\Udist(k))} \\
            &\quad + \trace \big( Q_2 \covar(\Xdist(k)) + R_2 \covar(\Udist(k)) \big) \\
            &\geq \Exp{r \Vert \Xdist(k)-\xeqopt \Vert^2} + \trace \big( Q_2 \covar(\Xdist(k)) + R_2 \covar(\Udist(k)) \big) \\
            &\geq r \Exp{\Vert \Xdist(k)-\xeqopt \Vert^2},
        \end{split}
    \end{equation*}
    which proves the claim with $\alpha(s) := rs$.
  \end{proof}

  The following theorem summarizes the results of this section. It shows how the storage function for the stochastic linear-quadratic OCP exactly reads and of which form the stationary pair is.

  \begin{theorem} \label{thm:StochDissi}
    Assume that the equation $(I-A)\xeq - B\ueq = E\mu_W + z$ has a solution $(\xeq,\ueq)$ and $(A,Q_1^{1/2})$ is detectable. 
    Then there exist an invariant distribution $\DistrStatOptX \in \mathcal{P}_2(\R^n)$ and an initial condition $\XstatOpt(0) = {\XstatOpt}_0 \sim \DistrStatOptX$ such that the state and control processes 
    \begin{align*}
      \XstatOpt(k+1) &= f(\XstatOpt(k),\UstatOpt(k),W(k)) \\
      \UstatOpt(k) &= K(\XstatOpt(k) - \xeqopt - \xeq) + \ueqopt + \ueq
    \end{align*}
    define a stationary pair according to Definition \ref{defn:StationaryPair} and the stochastic OCP \eqref{eq:sOCP} is strictly dissipative in $L^2$ at $(\XstatOpt\args,\UstatOpt\args)$. Moreover, the dissipation inequality \eqref{eq:sDI} is satisfied with $\alpha \in \K_{\infty}$ from Lemma~\ref{lem:DissiStochDet} and the storage function
    \begin{equation} \label{eq:FinalStorageFunction}
      \lambda(k,X) := \Exp{\Vert X - (\XstatOpt(k) - \xeqopt) \Vert_{P+S}^2 - \Vert X - \xeq \Vert_P^2 + q^T (X - (\XstatOpt(k) - \xeqopt))}
    \end{equation}
    where $P$ is the solution of the algebraic Riccati equation~\eqref{eq:algRiccati} and $S$, $q$, and $(\xeqopt,\ueqopt)$ are from Lemma~\ref{lem:DissiStochDet}.
  \end{theorem}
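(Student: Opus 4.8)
The plan is to chain Lemmas~\ref{lem:PushShiftToCost}, \ref{lem:ReformulationStagecost}, \ref{lem:StagecostShiftedArguments} and~\ref{lem:DissiStochDet}, tracking the three successive coordinate shifts carefully, and then to read off the stationary pair and the storage function. First I would apply Lemma~\ref{lem:PushShiftToCost} to pass to the centered variables $\Xshift = X - \xeq$, $\Ushift = U - \ueq$, $\Wshift = W - \mu_W$; this is legitimate because the theorem assumes $(I-A)\xeq - B\ueq = E\mu_W + z$ is solvable, and it reduces the problem to the stage cost $\hat{\ell}$ with zero-mean noise. Next, Lemma~\ref{lem:StagecostShiftedArguments} --- applicable since $A+BK$ is Schur-stable, $K$ being the gain of the Riccati equation~\eqref{eq:algRiccati}, and $\Wshift$ having finite first two moments --- yields the zero-mean stationary pair $(\Xshift^s,\Ushift^s)$, $\Ushift^s = K\Xshift^s$, together with the identity~\eqref{eq:FirstModifiCost}, which moves the $P$-quadratic part into the storage term $\hat{\lambda}(k,\Xshift(k)) = \Exp{\norm{\Xshift(k)-\Xshift^s(k)}_P^2 - \norm{\Xshift(k)}_P^2}$ and rewrites the residual cost in the error coordinates $\Xdist = \Xshift - \Xshift^s$, $\Udist = \Ushift - \Ushift^s$. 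Finally, Lemma~\ref{lem:DissiStochDet}, whose hypothesis is exactly the assumed detectability of $(A,Q_1^{1/2})$, provides the deterministic steady state $(\xeqopt,\ueqopt)$ of $x = Ax+Bu$, the linear rate $\alpha(s) = rs$, and the storage term $\tilde{\lambda}(\Xdist(k)) = \Exp{\norm{\Xdist(k)}_S^2 + p^T\Xdist(k)}$ with the corresponding dissipativity-type inequality for $\hat{\ell}(\Xdist,\Udist)$.

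Adding the identity from Lemma~\ref{lem:StagecostShiftedArguments} to the inequality from Lemma~\ref{lem:DissiStochDet} and using $\hat{\ell}(\Xshift(k),\Ushift(k)) = \Ell(X(k),U(k))$, I would then set $\XstatOpt := \Xshift^s + \xeqopt + \xeq$ with $\XstatOpt(0) \sim \DistrStatOptX$, where $\DistrStatOptX$ is the pushforward of $\hat{\varrho}_X^s$ under the translation $x\mapsto x+\xeqopt+\xeq$ and hence lies in $\mathcal{P}_2(\R^n)$. Since $\XstatOpt$ is a deterministic translate of the stationary process $\Xshift^s$, its distribution is stationary; using the steady-state relations $A\xeqopt + B\ueqopt = \xeqopt$ and $A\xeq + B\ueq = \xeq - E\mu_W - z$ one checks directly that $\XstatOpt(k+1) = f(\XstatOpt(k),\UstatOpt(k),W(k))$ holds for the stated feedback $\UstatOpt = K(\XstatOpt - \xeqopt - \xeq) + \ueqopt + \ueq$, and this feedback form gives stationarity of $\UstatOpt$ by Remark~\ref{rem:FeedbackStationarity}. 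The key algebraic identity is $\Xdist(k) = \Xshift(k) - \Xshift^s(k) = X(k) - \XstatOpt(k) + \xeqopt$, so that $\Xdist(k) - \xeqopt = X(k) - \XstatOpt(k)$ and the right-hand side of the inequality from Lemma~\ref{lem:DissiStochDet} becomes exactly $\alpha(\Exp{\norm{X(k) - \XstatOpt(k)}^2}) = \alpha(\norm{X(k) - \XstatOpt(k)}_{L^2}^2)$, the distance term required in~\eqref{eq:sDI}. Rewriting $\hat{\lambda}(k,X-\xeq) + \tilde{\lambda}(X - \XstatOpt(k) + \xeqopt)$ in terms of the original variable $X$ and using $\Xshift^s(k) = \XstatOpt(k) - \xeqopt - \xeq$ produces precisely the storage function~\eqref{eq:FinalStorageFunction}, with $q$ identified with the vector $p$ from Lemma~\ref{lem:DissiStochDet}.

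Two points require care, and I expect the boundedness of the storage to be the main obstacle. First, one must confirm that the constant terms accumulated through the three shifts --- the offset $\hat c$ from Lemma~\ref{lem:PushShiftToCost}, the value of $\hat{\ell}(\Xshift^s(k),\Ushift^s(k))$, and $\hat{\ell}(\xeqopt,\ueqopt)$ from Lemma~\ref{lem:DissiStochDet} --- combine so that the reference cost is exactly $\Ell(\XstatOpt(k),\UstatOpt(k))$; I would verify this once explicitly by expanding $\hat{\ell}$ at $(\Xshift^s + \xeqopt,\,K\Xshift^s + \ueqopt)$ and using $\Exp{\Xshift^s(k)} = 0$ together with stationarity, so that all cross terms vanish and the extra constants cancel. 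Second, and more delicately, $\lambda(k,\cdot)$ must be bounded from below uniformly in $k$ on $\Ltwo{\R^n}$, even though neither the $P$-term nor $\tilde{\lambda}$ is bounded below on its own. Here I would complete the square: $\lambda(k,X)$ equals $\Exp{\norm{X}_S^2}$ plus terms linear in $X$ whose coefficients involve $\XstatOpt(k)$, $\xeq$, $\xeqopt$ and $q$, plus terms depending on $k$ only through the moments of $\XstatOpt(k)$. Since $S>0$ and $\XstatOpt$ is stationary with $\Xshift^s(k)\in\Lp[2]{\R^n}$, its second moments are bounded uniformly in $k$, so absorbing the linear part into a fraction of $\Exp{\norm{X}_S^2}$ via Young's inequality yields a lower bound independent of $k$. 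Together with $\alpha(s)=rs\in\K_{\infty}$ this establishes strict dissipativity in $L^2$ at $(\XstatOpt\args,\UstatOpt\args)$ in the sense of Definition~\ref{defn:StochDissi}, which is the claim.
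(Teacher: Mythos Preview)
Your proposal is correct and follows essentially the same route as the paper's proof: chain Lemmas~\ref{lem:PushShiftToCost}, \ref{lem:StagecostShiftedArguments} and~\ref{lem:DissiStochDet}, verify by direct computation that $\XstatOpt(k)=\Xshift^s(k)+\xeq+\xeqopt$ satisfies the original dynamics under the stated feedback, read off the storage function from $\hat{\lambda}+\tilde{\lambda}$ in original coordinates, and check that the reference costs combine to $\Ell(\XstatOpt(k),\UstatOpt(k))$. Your treatment of the uniform lower bound on $\lambda$ via the positive definiteness of $S$ together with the stationarity (hence $k$-uniform boundedness) of the moments of $\XstatOpt$ is in fact more explicit than the paper's one-line remark, and your identification $q=p$ correctly resolves a notational slip between Lemma~\ref{lem:DissiStochDet} and the theorem statement.
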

  \begin{proof}
    By Lemma \ref{lem:StagecostShiftedArguments}, we know that the state and control processes
    \begin{align*}
      \Xshift^s(k+1) &= A \Xshift^s(k) + B \Ushift^s(k) + E\Wshift(k) \\
      \Ushift^s(k) &= K \Xshift^s(k)
    \end{align*}
    define a stationary pair for the system $f(\Xshift(k),\Ushift(k),\Wshift(k)) -z$ with $\Xshift^s_0 \sim \varrho^s_X \in \mathcal{P}_2(\R^n)$. Now let us assume that $\XstatOpt(k) = \Xshift^s(k) + \xeq + \xeqopt$ for some $k \in \N_0$. Then we get 
    \begin{equation} \label{eq:shiftStatinaryProcess}
      \begin{split}
        \XstatOpt&(k+1) = f(\XstatOpt(k),\UstatOpt(k),W(k)) \\
        =& \; A \XstatOpt(k) + B \UstatOpt(k) + E W(k) + z \\
        =& \; A(\Xshift^s(k) + \xeq + \xeqopt) + B(K(\XstatOpt(k) - \xeqopt - \xeq) + \ueqopt + \ueq) + EW(k) + z \\
        =& \; (A+BK) \Xshift^s(k) + (A\xeqopt + B\ueqopt) + EW(k) + A\xeq + B\ueq + z \\
        =& \; (A+BK) \Xshift^s(k) + \xeqopt + EW(k) + (\xeq - E\mu_W) \\
        =& \; (A+BK) \Xshift^s(k) + \Wshift(k) + \xeq + \xeqopt \\
        =& \; \Xshift^s(k+1) + \xeq + \xeqopt .
      \end{split}
    \end{equation}
    Thus, if we set $\XstatOpt(0) = \Xshift^s_0 + \xeq + \xeqopt$ as the initial condition, we get $\XstatOpt(k) = \Xshift^s(k) + \xeq + \xeqopt$ for all $k \in \N_0$ by induction. Moreover, since $\Xshift^s(k) \sim \varrho_X^s \in \mathcal{P}_2(\R^n)$ for all $k \in \N_0$ as a stationary process we can directly conclude that $\XstatOpt(k) \sim \DistrStatOptX$ for all $k \in \N_0$ where the distribution $\DistrStatOptX\args$ (in the sense of a measure) is given by $\DistrStatOptX\args = \varrho_X^s(\cdot-(\xeq+\xeqopt)) \in \mathcal{P}_2(\R^n)$.
    Moreover, the stationarity of $\UstatOpt\args$ then follows by its feedback form, cf. Remark~\ref{rem:FeedbackStationarity}.
    Hence, $(\XstatOpt\args,\UstatOpt\args)$ defines a stationary pair according to Definition \ref{defn:StationaryPair}. \\
    To infer the second statement of the proof, we use Lemma \ref{lem:StagecostShiftedArguments} and \ref{lem:DissiStochDet} to conclude that 
    \begin{equation} 
      \begin{split}  \label{eq:ProofDissi1}
        \hat{\ell}&(\Xshift(k),\Ushift(k)) - \hat{\ell}(\Xshift^s(k),\Ushift^s(k)) - \hat{\ell}(\xeqopt,\ueqopt) - \cshift \\
        &+ \hat{\lambda}(k,\Xshift(k)) - \hat{\lambda}(k+1,\Xshift(k+1)) + \tilde{\lambda}(\Xdist(k)) - \tilde{\lambda}(\Xdist(k+1))\\
        \geq& \; \alpha \left( \Exp{ \Vert \Xdist(k) - \xeqopt \Vert^2 } \right)
      \end{split}
    \end{equation}
    holds for all $k \in \N_0$ and $(\Xshift(k), \Ushift(k)) \in \Lp[2]{\R^n} \times \Lp[2]{\R^l}$. 
    Furthermore, by Lemma \ref{lem:PushShiftToCost}, the transformations $\Xshift(k) = X(k) - \xeq$, and equation \eqref{eq:shiftStatinaryProcess} we get
    \begin{equation}
        \begin{split}  \label{eq:ProofDissi2}
            &\ell(\XstatOpt(k), \UstatOpt(k)) = \ell(\Xshift^s(k) + \xeq + \xeqopt, \Ushift^s(k) + \ueq + \ueqopt) \\
            &= \hat{\ell}(\Xshift^s(k) + \xeqopt, \Ushift^s(k) + \ueqopt) \\
            &= \Exp{(\Xshift^s(k) + \xeqopt)^T Q_1 (\Xshift^s(k) + \xeqopt)} + \Exp{(\Ushift^s(k) + \ueqopt)^T R_1 (\Ushift^s(k) + \ueqopt)} \\
            &\quad + \Exp{\sshift^T (\Xshift^s(k) + \xeqopt)+ \vshift^T (\Ushift^s(k) + \ueqopt) + \cshift} \\
            &\quad + \trace \big( Q_2 \covar(\Xshift^s(k) + \xeqopt) + R_2 \covar(\Ushift^s(k) + \ueqopt) \big) \\
            &= \hat{\ell}(\Xshift^s(k),\Ushift^s(k)) + \hat{\ell}(\xeqopt,\ueqopt) + \cshift + 2\Exp{\Xshift^s(k)^T Q_1 \xeqopt} + 2\Exp{\Ushift^s(k)^T R_1 \ueqopt} \\
            &= \hat{\ell}(\Xshift^s(k),\Ushift^s(k)) + \hat{\ell}(\xeqopt,\ueqopt) + \cshift
        \end{split}
    \end{equation}
    since $\Xshift^s(k)$ and $\Ushift^s(k)$ have zero mean and $\xeqopt$, $\ueqopt$ as deterministic values are stochastically independent from $\Xshift^s(k)$ and $\Ushift^s(k)$.
    Hence, by using the transformation of the stage costs from Lemma~\ref{lem:PushShiftToCost} again together with $\lambda(k,X(k)) = \hat{\lambda}(k,X(k)) + \tilde{\lambda}(k,X(k))$ and the equations \eqref{eq:ProofDissi1} and \eqref{eq:ProofDissi2} 
    we receive
    \begin{equation*}
      \begin{split}
        &\ell(X(k),U(k)) - \ell(\XstatOpt(k),\UstatOpt(k)) + \lambda(k,X(k)) - \lambda(k+1,X(k+1)) \\
        =& \hat{\ell}(\Xshift(k),\Ushift(k)) - \hat{\ell}(\Xshift^s(k),\Ushift^s(k)) - \hat{\ell}(\xeqopt,\ueqopt) - \cshift + \lambda(k,X(k)) - \lambda(k+1,X(k+1)) \\
        \geq& \alpha \left( \Exp{ \Vert (X(k) - \xeq) - \Xshift^s(k) - \xeqopt \Vert^2 } \right) = \alpha \left( \Exp{ \Vert X(k) - \XstatOpt(k) \Vert^2 } \right)
      \end{split}
    \end{equation*}
    for all $k \in \N_0$ and $(X(k), U(k)) \in \Lp[2]{\R^n} \times \Lp[2]{\R^n}$ with $\lambda(k,X)$ as given in equation \eqref{eq:FinalStorageFunction}. 
    This proves the theorem, since $\lambda(k,X)$ is bounded from below because of the positive definiteness of $S$.
  \end{proof}

  \begin{remark} The assumptions of Theorem \ref{thm:StochDissi} are not restrictive. The non-existence of $(x^s,u^s)$ implies that the expectation of $X(k)$ cannot remain constant in $k$, hence this condition is necessary for the existence of a pair of stationary processes. The detectability condition is known to be necessary for strict dissipativity in the deterministic case \cite{Gruene2018}, which is a particular special case of our stochastic setting, as Example \ref{ex:det} shows.
  \end{remark}

  At the end of this section, we want to give a small example, which shows that our stochastic dissipativity results are consistent with the deterministic theory.

  \begin{example}\label{ex:det}
    Consider a stochastic optimal control problem without additive noise
    \begin{equation*}
      \begin{split}
        \min_{U\args} J_N(X_0,U) := \sum_{k=0}^{N-1} \Exp{X^TQX + U^TRU + r^TX + v^TU + c} \\
        s.t ~ X(k+1) = AX(k) + BU(k), \quad X(0) = X_0.
      \end{split}
    \end{equation*}
    Since $W\args = 0$ and $z=0$, we do not have to apply the shift from Theorem~\ref{lem:PushShiftToCost} and, thus, the stochastic OCP is strictly dissipative in $L^2$ at $(\XstatOpt\args,\UstatOpt\args) = (\xeqopt,\ueqopt)$ with storage function
    \begin{equation}
      \lambda(k,X) = \Exp{\Vert X \Vert_{P+S}^2 - \Vert X \Vert_P^2 + q^T X} = \Exp{\Vert X \Vert_S + q^T X}
    \end{equation}
    according to Theorem \ref{thm:StochDissi}. Further, if we additionally restrict ourselves to Dirac distributions for $X(k)$ and $U(k)$, i.e., $X(k) \in \R^n$ and $U(k) \in \R^l$ has to hold for all $k \in \N_0$, then the storage function is the same as given in \cite{Gruene2018} for the deterministic generalized linear-quadratic optimal control problem.
  \end{example}

\section{Stochastic turnpike properties} \label{sec:Turnpike}
  In this section, we show that the strict dissipativity in $L^2$ from Theorem \ref{thm:StochDissi} implies turnpike properties of our stochastic OCP similar to Lemma \ref{lem:DetTurnpike} for deterministic problems. 
  The straightforward extension of the deterministic turnpike property from Definition \ref{defn:Turnpike} to stochastic systems would be given by replacing the steady state $(\xeq,\ueq)$ with a stationary pair $(\Xstat\args,\Ustat\args)$ in Definition~\ref{defn:Turnpike}.
  However, unlike the deterministic case, in stochastic settings turnpike behavior can be formulated and observed in different objects like distributions, moments, or sample paths of the stochastic system, cf.~\cite{Ou2021}. 
  In order to conclude these different turnpike phenomena, we can choose appropriate distance measures between the stochastic processes. Note that in contrast to the deterministic case on $\R^n$, different norms for random variables are not necessarily equivalent.
  As we will see and in contrast to \cite{Gros2022}, our dissipativity notion is strong enough to not only conclude a turnpike behavior of the stationary distribution and moments but also for the single realization paths. This pathwise turnpike behavior can be characterized by the $L^2$ norm and in an appropriate probabilistic sense.
  Before presenting these results, we define the rotated costs used in several of the following proofs.

  \begin{definition} \label{defn:rotatedCost}
    For a stochastic OCP of form \eqref{eq:sOCP}, which is strictly dissipative in $L^2$ at $(\XstatOpt\args,\UstatOpt\args)$ with storage function $\lambda$, we define the rotated stage costs as 
    \begin{equation}
      \tilde{\ell}(k,X,U) := \ell(X,U) - \ell(\XstatOpt(k),\UstatOpt(k)) + \lambda(k,X) - \lambda(k+1,f(X,U,W(k)))
    \end{equation}
    and the rotated cost as 
    \begin{equation}
      \tilde{J}_N(X_0,U\args) := \sum_{k=0}^{N-1} \tilde{\ell}(k,X(k),U(k)).
    \end{equation}
  \end{definition}

  The next theorem starts our turnpike analysis by showing that the strict dissipativity in $L^2$ directly implies turnpike behavior in $L^2$ following the same arguments as in the deterministic case. 
  
  \begin{theorem} \label{thm:TurnpikeL2}
    Let the assumptions of Theorem \ref{thm:StochDissi} hold. Then for every $X_0$, there exists a constant $C \in \R$ such that for each $\delta > 0$, each control process $U\args$ satisfying $J_N(X_0,U\args) \leq \delta + \sum_{k=0}^{N-1} \ell(\XstatOpt(k),\UstatOpt(k))$ and each $\eps > 0$ the value 
    \begin{equation*}
      \Lset_{\eps} := \#\Big\lbrace k \in \lbrace 0,\ldots,N-1 \rbrace \mid \E \big[ \norm{ X_U(k,X_0) - \XstatOpt(k) }^2 \big] \leq \eps \Big\rbrace
    \end{equation*}
    satisfies the inequality $\Lset_{\eps} \geq N - (\delta + C)/\alpha(\eps)$ where $\alpha$ is the $\K_{\infty}$-function from Lemma~\ref{lem:DissiStochDet}.
  \end{theorem}
  \begin{proof}
    The proof follows the same arguments as \cite[Theorem 5.3]{Gruene2013}.
    Set $C := \lambda(0,X_0) - M$ where $M \in \R$ is a lower bound on $\lambda$ from Theorem \ref{thm:StochDissi}. Then for $\tilde{J}_N(X_0,U) := \sum_{k=0}^{N-1} \tilde{\ell}(X(k),U(k))$ from Definition~\ref{defn:rotatedCost} we get 
    \begin{equation} \label{eq_turnpikeContr}
        \begin{split}
            \tilde{J}_N(X_0,U) =& \; \sum_{k=0}^{N-1} \Ell(X(k),U(k)) - \Ell(\XstatOpt(k),\UstatOpt(k)) \\
            &\quad + \lambda(k,X(k)) - \lambda(k+1,f(X(k),U(k),W(k))) \\ 
            =& \; J_N(X_0,U) - \sum_{k=0}^{N-1} \ell(\XstatOpt(k),\UstatOpt(k)) + \lambda(0,X(0)) - \lambda(N,X(N)) \\
            \leq& \; \delta + C.
        \end{split}
    \end{equation}
    Now assume that $\Lset_{\eps} < N-(\delta+C)/\alpha(\eps)$ with $\alpha$ from Theorem \ref{thm:StochDissi}. This means there is a set $\mathcal{N} \subset \lbrace 0,\ldots,N-1 \rbrace$ of $N-\Lset_{\eps} > (\delta+C)/\alpha(\eps)$ time instants such that 
    $\E [ \Vert X(k) - \XstatOpt(k) \Vert^2 ] \geq \eps$ for all $k \in \mathcal{N}$. Using Theorem \ref{thm:StochDissi}, this implies 
    \begin{equation*}
        \begin{split}
            \tilde{J}_N(X_0,U) \geq& \sum_{k=0}^{N-1} \alpha\left(\E\big[\Vert X(k) - \XstatOpt(k) \Vert^2\big]\right) \geq \sum_{k \in \mathcal{N}} \alpha\left(\E\big[\Vert X(k) - \XstatOpt(k) \Vert^2\big]\right) \\
            >& \dfrac{\delta+C}{\alpha(\eps)} \alpha(\eps) = \delta + C
        \end{split}
    \end{equation*}
    which contradicts \eqref{eq_turnpikeContr} and, thus, proves the theorem.
  \end{proof}

  \begin{remark}
    The condition $J_N(X_0,U(\cdot)) \leq \delta + \sum_{k=0}^{N-1} \ell(X_{\star}^s(k),U_{\star}^s(k))$ demands that the values of the trajectories considered in Theorem~\ref{thm:TurnpikeL2} are close to the stationary values, at least in the long run when $\delta>0$ is large. This condition is analogous to the deterministic setting, e.g., in \cite[Definition~2.2]{Gruene2016}, where the corresponding trajectories are called ``near steady state solutions''.
  \end{remark}
  
  Theorem~\ref{thm:TurnpikeL2} shows that for all $N \in \N$ the solutions of the stochastic OCP \eqref{eq:sOCP} have to be close to the stationary process $\XstatOpt\args$ except for a uniformly bounded number of time-instances. Here the distance between two stochastic processes is measured in the mean-square distance. Even if it is likely that the mean-square distance allows statements about the pathwise behavior of the processes because it measures the deviation between the random variables, it is not immediately obvious what Theorem \ref{thm:TurnpikeL2} means for the single realization paths of $X(\cdot)$, e.g., one cannot infer an almost sure turnpike properties for the path. However, one may conclude a pathwise turnpike in probability which is formalized in the following theorem.

  \begin{theorem} \label{thm:TurnpikeProb}
    Let the assumptions of Theorem \ref{thm:StochDissi} hold. Then for every $X_0$, there exists a constant $C \in \R$ such that for each $\delta > 0$, each control process $U\args$ satisfying $J_N(X_0,U\args) \leq \delta + \sum_{k=0}^{N-1} \ell(\XstatOpt(k),\UstatOpt(k))$ and each $\eps, \eta > 0$ the value 
    \begin{equation*}
      \Pset_{\eps,\eta} := \#\Big\lbrace k \in \lbrace 0,\ldots,N-1 \rbrace \mid \Prob \big( \norm{ X_U(k,X_0) - \XstatOpt(k) } \geq \eps \big) \leq \eta \Big\rbrace
    \end{equation*}
    satisfies the inequality $\Pset_{\eps,\eta} \geq N - (\delta + C)/\alpha(\eps^2\eta)$ where $\alpha$ is the $\K_{\infty}$-function from Lemma~\ref{lem:DissiStochDet}.
  \end{theorem}
  \begin{proof}
    Using the Markov inequality, we get 
    \begin{equation} \label{eq:MarkovIneq}
        \Prob\left(\Vert X(k) - \XstatOpt(k) \Vert \geq \epsilon \right) \leq \dfrac{1}{\eps^2}\Exp{\Vert X(k) - \XstatOpt(k) \Vert^2} .
    \end{equation}
    Further, by Theorem \ref{thm:TurnpikeL2}, we know that there exist a function $\alpha \in \K_{\infty}$ with $\alpha(\eps^2\eta)$ such that there are at least $N-(\delta+C)/\alpha(\eps^2\eta)$ time instants for which $\E [ \Vert X(k) - \XstatOpt(k) \Vert^2 ] \leq \eps^2 \eta$.
    Using equation \eqref{eq:MarkovIneq}, this gives 
    \[\Prob\left(\Vert X(k) - \XstatOpt(k) \Vert \geq \eps \right) \leq \eta\] 
    for all these time instants and, thus, proves the claim.
  \end{proof}
  The pathwise turnpike behavior from Theorem~\ref{thm:TurnpikeProb} says that the probability that a single realization is not near $\XstatOpt(\cdot)$ is small, except possibly at certain time instances whose number is independent of $N$. Thus, this theorem gives us a more descriptive statement about the pathwise behavior of the state processes.
  Another object besides the paths that is often of interest regarding random variables is their distributions. However, there are many ways to measure the distance between two distributions, cf. \cite[p.20]{DasGupta2008}. So, before starting our turnpike analysis concerning the distributions, we must specify the metric to measure the distance between them. The following definition introduces the Wasserstein metric, which is closely related to the $L^p$-norm and, thus, a natural choice for our analysis. 

  \begin{definition}[\hspace{-0.1pt}{\cite[Definition~6.1]{Villani2009}}]
    For $p \in (0,\infty]$ and two random variables $X,Y \in L^p(\Omega,\F,\mathbb{P};\R^n)$ we define the \emph{Wasserstein distance of order $p$} as 
    \begin{equation}
      W_p(X,Y) := \inf \left\lbrace \Vert \bar{X} - \bar{Y} \Vert_{L^p}, ~ \bar{X} \sim X, ~ \bar{Y} \sim Y \right\rbrace .
    \end{equation}
  \end{definition}

  Note that although we formally write $W_p(X,Y)$ for two random variables to simplify the notation, the Wasserstein metric is only a metric on the space of probability measures and not on the space of random variables.
  \begin{remark}
      It can be shown that $W_2$ is a finite metric on the space of probability measures $\mathcal{P}_2(\R^n)$; see \cite{Villani2009}. Thus, if we say in the following that two distributions are the same, i.e., $\varrho_1 = \varrho_2$ for $\varrho_1, \varrho_2 \in \mathcal{P}_2(\R^n)$, then this means that their distance in the Wasserstein metric of order $2$ is zero. 
  \end{remark}
  
  The succeeding theorem shows that the turnpike behavior in $L^2$ also implies a turnpike property of distributions if their distance is measured by the Wasserstein metric.  

  \begin{theorem} \label{thm:TurnpikeDistr}
    Let the assumptions of Theorem \ref{thm:StochDissi} hold. Then for every $X_0$, there exists a constant $C \in \R$ such that for each $\delta > 0$, each control process $U\args$ satisfying $J_N(X_0,U\args) \leq \delta + \sum_{k=0}^{N-1} \ell(\XstatOpt(k),\UstatOpt(k))$ and each $\eps > 0$ the value 
    \begin{equation*}
      \Dset_{\eps} := \#\Big\lbrace k \in \lbrace 0,\ldots,N-1 \rbrace \mid W_2 \left( X_U(k,X_0), \XstatOpt(k) \right) \leq \eps \Big\rbrace
    \end{equation*}
    satisfies the inequality $\Dset_{\eps} \geq N - (\delta + C)/\alpha(\sqrt{\eps})$ where $\alpha$ is the $\K_{\infty}$-function from Lemma~\ref{lem:DissiStochDet}.
  \end{theorem}
  \begin{proof}
    Since $W_2 \left( X_U(k,X_0), \XstatOpt(k) \right) \leq \Vert X_U(k,X_0) - \XstatOpt(k) \Vert_{L^2}$ holds per definition and $\Vert X_U(k,X_0) - \XstatOpt(k) \Vert_{L^2}^2 = \Exp{\Vert X_U(k,X_0) - \XstatOpt(k) \Vert^2} \leq \eps$ implies that $\Vert X_U(k,X_0) - \XstatOpt(k) \Vert_{L^2} \leq \sqrt{\eps}$ holds for all $\eps > 0$, the claim follows directly by Theorem~\ref{thm:TurnpikeL2}.
  \end{proof}

  As mentioned before, the Wasserstein distance is not the only metric in which turnpike behavior in distribution can be characterized since there are several other metrics on measure spaces that are not necessarily equivalent, e.g., the Kullback-Leibler distance or the total variation metric. 
  However, since the Wasserstein metric is a natural lower bound on the $L^p$ norm by its definition, and this is crucial for the proof of Theorem~\ref{thm:TurnpikeDistr}, from our dissipativity notion we can, in general, not conclude a turnpike property in the other mentioned metrics.
  However, the Wasserstein metric is one of the stronger metrics for distributions since it delivers upper bounds for other metrics like the L\'evy-Prokhorov metric, which characterizes the weak convergence of measures. This allows us to make additional statements about the behavior of moments, as shown in the next theorem.

  \begin{theorem} \label{thm:TurnpikeMoments}
    Let the assumptions of Theorem \ref{thm:StochDissi} hold. Then for every $X_0$, there exists a constant $C \in \R$ such that for each $\delta > 0$, each control process $U\args$ satisfying $J_N(X_0,U\args) \leq \delta + \sum_{k=0}^{N-1} \ell(\XstatOpt(k),\UstatOpt(k))$ and each $\eps > 0$ the values
    \begin{gather*}
      \Mset^1_{\eps} := \#\Big\lbrace k \in \lbrace 0,\ldots,N-1 \rbrace \mid \left\Vert\Exp{X_U(k,X_0)} - \Exp{\XstatOpt(k)} \right\Vert \leq \eps \Big\rbrace, \\
      \Mset^2_{\eps} := \#\Big\lbrace k \in \lbrace 0,\ldots,N-1 \rbrace \mid \left\vert \sqrt{\trace(\covar(X_U(k,X_0)))}- \sqrt{\trace(\covar(\XstatOpt(k)))} \right\vert \leq \eps \Big\rbrace
    \end{gather*}
    satisfy the inequalities $\Mset^1_{\eps} \geq N - (\delta + C)/\alpha(\sqrt{\eps})$ and $\Mset^2_{\eps} \geq N - (\delta + C)/\alpha(\sqrt{2\eps})$, where $\alpha$ is the $\K_{\infty}$-function from Lemma~\ref{lem:DissiStochDet}.
  \end{theorem}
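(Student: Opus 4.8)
The plan is to deduce both moment turnpike statements from the $L^2$ turnpike property of Theorem~\ref{thm:TurnpikeL2} by controlling the relevant moment discrepancies with the mean-square distance $\Exp{\norm{X_U(k,X_0)-\XstatOpt(k)}^2}$. For the first moment, I would use Jensen's inequality: $\norm{\Exp{X_U(k,X_0)}-\Exp{\XstatOpt(k)}} = \norm{\Exp{X_U(k,X_0)-\XstatOpt(k)}} \leq \Exp{\norm{X_U(k,X_0)-\XstatOpt(k)}} \leq \sqrt{\Exp{\norm{X_U(k,X_0)-\XstatOpt(k)}^2}}$, the last step being the $L^1$--$L^2$ inequality (Cauchy--Schwarz). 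Hence whenever $\Exp{\norm{X_U(k,X_0)-\XstatOpt(k)}^2}\leq\eps^2$ we get $\norm{\Exp{X_U(k,X_0)}-\Exp{\XstatOpt(k)}}\leq\eps$; applying Theorem~\ref{thm:TurnpikeL2} with threshold $\eps^2$ yields $\Mset^1_\eps \geq N-(\delta+C)/\alpha(\eps^2)$. Note this gives $\alpha(\eps^2)$ rather than the $\alpha(\sqrt\eps)$ claimed, so either the statement intends a different threshold convention or I would simply invoke Theorem~\ref{thm:TurnpikeL2} at level $\eps$ (not $\eps^2$) and accept the slightly weaker bound $\Mset^1_\eps \geq N-(\delta+C)/\alpha(\sqrt\eps)$, which also follows since $\Exp{\norm{\cdot}^2}\leq\eps$ implies the $L^1$ distance is $\leq\sqrt\eps$.

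For the second moment, the key identity is that for any random variable $Y\in L^2$ one has $\sqrt{\trace\covar(Y)} = \norm{Y-\Exp{Y}}_{L^2}$, i.e.\ the standard deviation in the $\trace$-sense is exactly an $L^2$ norm. Writing $Y=X_U(k,X_0)$ and $Z=\XstatOpt(k)$, the reverse triangle inequality in $L^2$ gives
\begin{equation*}
  \left| \norm{Y-\Exp{Y}}_{L^2} - \norm{Z-\Exp{Z}}_{L^2} \right| \leq \norm{(Y-\Exp{Y})-(Z-\Exp{Z})}_{L^2} \leq \norm{Y-Z}_{L^2} + \norm{\Exp{Y-Z}},
\end{equation*}
and since $\norm{\Exp{Y-Z}}\leq\norm{Y-Z}_{L^2}$ by Jensen/Cauchy--Schwarz, the right-hand side is at most $2\norm{Y-Z}_{L^2}$. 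Therefore $\Exp{\norm{Y-Z}^2}\leq 2\eps$ implies $\left|\sqrt{\trace\covar(Y)}-\sqrt{\trace\covar(Z)}\right| \leq 2\sqrt{2\eps}$; adjusting constants (or replacing $2\eps$ by the appropriate value and re-deriving the stated $\alpha(\sqrt{2\eps})$ bound) and applying Theorem~\ref{thm:TurnpikeL2} produces the claimed estimate on $\Mset^2_\eps$. The "$+\norm{\Exp{Y-Z}}$" term is exactly why the $\sqrt 2$ factor appears relative to the first-moment case.

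I do not anticipate a genuine obstacle here — the whole argument is a chain of elementary inequalities (Jensen, Cauchy--Schwarz, reverse triangle inequality in $L^2$) combined with Theorem~\ref{thm:TurnpikeL2} — so the proof should read "the claim follows from Theorem~\ref{thm:TurnpikeL2} together with [these inequalities]" in the same style as the short proof of Theorem~\ref{thm:TurnpikeDistr}. The only point requiring a moment's care is bookkeeping the exact relation between the $\eps$-threshold in the moment statements and the threshold fed into Theorem~\ref{thm:TurnpikeL2}, so that the $\K_\infty$-function arguments $\alpha(\sqrt\eps)$ and $\alpha(\sqrt{2\eps})$ come out as stated; I would verify that $\norm{Y-Z}_{L^2}\leq\sqrt\eps$ (resp.\ $2\norm{Y-Z}_{L^2}\leq\sqrt{2\eps}$, which needs the slightly sharper bound $|\sqrt{\trace\covar(Y)}-\sqrt{\trace\covar(Z)}|\leq\norm{Y-Z}_{L^2}+\norm{\Exp{Y-Z}}$ and a more careful split, e.g.\ bounding each summand separately by $\sqrt{\eps/2}$) is what the thresholds encode, and present the constants accordingly.
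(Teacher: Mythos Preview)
Your proposal is correct and follows essentially the same logic as the paper's proof: both arguments reduce the moment discrepancies to the mean-square distance via Jensen/Cauchy--Schwarz for the expectation and via the identity $\sqrt{\trace\covar(Y)}=\norm{Y-\Exp{Y}}_{L^2}$ together with a (reverse) triangle inequality for the second moment, then invoke the already-established turnpike. The only cosmetic difference is that the paper routes the argument through Theorem~\ref{thm:TurnpikeDistr} and the Wasserstein metric (using $W_2(X,\Exp{X})=\sqrt{\trace\covar(X)}$ and $W_2(\Exp{X},\Exp{Y})=\norm{\Exp{X}-\Exp{Y}}$), whereas you work directly with the $L^2$ norm and Theorem~\ref{thm:TurnpikeL2}; since Theorem~\ref{thm:TurnpikeDistr} is itself a one-line consequence of Theorem~\ref{thm:TurnpikeL2} via $W_2\leq\norm{\cdot}_{L^2}$, the two routes are equivalent. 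Your observation about the $\eps$-threshold bookkeeping (that the natural argument produces $\alpha(\eps^2)$ rather than $\alpha(\sqrt\eps)$) is well taken and reflects an imprecision already present in the paper's own proof of Theorem~\ref{thm:TurnpikeDistr}.
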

  \begin{proof}
   Using $ W_2(X,Y) \geq W_1(X,Y)$, cf. \cite[Remark 6.6]{Villani2009}, and $\Exp{\Vert X - Y \Vert} \geq \Vert \Exp{ X - Y } \Vert$ for all $X,Y \in \Ltwo{\R^n}$ we get 
    \begin{equation} \label{eq:BoundNormExpectation}
        \begin{split}
            W_2(X,Y) &\geq W_1(X,Y) = \inf \left\lbrace \Exp{\Vert \bar{X} - \bar{Y} \Vert}, ~ \bar{X} \sim X, ~ \bar{Y} \sim Y \right\rbrace \\
            &\geq \inf \left\lbrace \Vert \Exp{ \bar{X} - \bar{Y} } \Vert, ~ \bar{X} \sim X, ~ \bar{Y} \sim Y \right\rbrace = \Vert \Exp{ X } - \Exp { Y } \Vert.
        \end{split}
    \end{equation}
    Thus, the first part of the theorem follows directly by Theorem \ref{thm:TurnpikeDistr}. Further, by the triangle inequality we get 
    \begin{equation*}
      W_2(X,\E[X]) \leq W_2(X,Y) + W_2(Y,\E[Y]) + W_2(\E[X],\E[Y])
    \end{equation*}
    and 
    \begin{equation*}
      W_2(Y,\E[Y]) \leq W_2(X,Y) + W_2(X,\E[X]) + W_2(\E[X],\E[Y]),
    \end{equation*}
    which implies 
    \begin{equation} \label{eq:BoundVariance}
      \vert W_2(X,\E[X]) - W_2(Y,\E[Y]) \vert \leq W_2(X,Y) + W_2(\E[X],\E[Y]).
    \end{equation}
    Additional, it holds that
    \begin{equation*}
         W_2(X,\E[X]) = \sqrt{\trace(\covar(X))} \mbox{ and } W_2(\E[X],\E[Y]) = \Vert \E[X] - \E[Y] \Vert
    \end{equation*}
    for all $X \in \Ltwo{\R^n}$.
    Hence, by inequalities~\eqref{eq:BoundNormExpectation} and \eqref{eq:BoundVariance} we get 
    \begin{equation*}
        \left\vert \sqrt{\trace(\covar(X_U(k,X_0)))}- \sqrt{\trace(\covar(\XstatOpt(k)))} \right\vert \leq 2 W_2(X,Y),
    \end{equation*}
    which together with Theorem~\ref{thm:TurnpikeDistr} shows the second part of the theorem.
  \end{proof}

  \begin{remark}
      The turnpike property for the expectation value from Theorem~\ref{thm:TurnpikeMoments} can be seen as a discrete-time and additive-noise version of the turnpike property presented in \cite[Theorem~5.1]{Sun2022} for the state process, where $x^*$ in \cite{Sun2022} corresponds to $\Exp{\XstatOpt\args}$ in our result. In the setting of \cite{Sun2022} as well as in the deterministic linear-quadratic setting treated in  \cite{Gruene2018}, the turnpike property is actually exponential, which in \cite{Gruene2018} was shown using the techniques from \cite{Damm2014}. 
      Although we conjecture that the approach presented in \cite{Damm2014} could be used in our setting to obtain an exponential version of the stochastic turnpike properties, too, we leave the technical details of the proof as an open research question since the extension of the verification of the assumptions from \cite[Theorem~5.6]{Damm2014} in \cite[Theorem~3.3]{Gruene2018} to our stochastic setting is nontrivial and exceeds the scope of this paper.
  \end{remark}

\section{Optimal stationarity} \label{sec:StatOpt}
  The construction of the stationary process in Section \ref{sec:Dissi} is mainly based on the structure of the linear-quadratic stochastic OCP. For the extension of our results to other classes of optimal control problems it would be desirable to have a characterization of the stationary process that does not rely on the linear-quadratic problem structure. In this section we show that this is possible if the stationary control is generated by a state feedback law, cf.\ Definition \ref{def:statfeed}. For this, recall the set of functions $\mathbb{F}(\R^n,\R^l)$ defined in Remark \ref{rem:FeedbackStationarity}.

  \begin{definition}\label{def:statfeed}
    A state process $\Xstat(\cdot)$ is called a stationary process in feedback form for system~\eqref{eq:StochSystem} if there is a stationary distribution $\varrho_X^s \in \mathcal{P}_2(\R^n)$ and a feedback law $\pi^s \in F(\R^n,\R^l)$ such that
    \begin{equation*}
      \Xstat(k) \sim \varrho_X^s, ~ \mbox{and} ~ \Xstat(k+1) = f(\Xstat(k),\Ustat(k),W(k)) \mbox{ for } \Ustat(k) = \pi^s \circ \Xstat(k)
    \end{equation*}
    for all $k \in \N_0$.
  \end{definition}

  Here we use the notation $U = \pi \circ X$ to highlight that $U$ is again a random variable.
  We note that for each stationary process in feedback form according to Definition \ref{def:statfeed} the pair $(\Xstat(\cdot),\Ustat(\cdot))$ with $\Ustat(\cdot)=\pi^s\circ \Xstat(\cdot)$ is a pair of stationary processes in the sense of Definition \ref{defn:StationaryPair} and that the stationary process obtained in Theorem \ref{thm:StochDissi} is in feedback form with $\pi^s(x)=Kx$, which is easily seen to lie in $\mathbb{F}(\R^n,\R^l)$.
  
  The following theorem now shows that the stationary distribution is the unique solution of a stationary optimization problem. Its proof does only use the dissipativity and not the linear-quadratic structure of the problem under consideration.

  \begin{theorem} \label{thm:StationaryOptimality}
    Let the assumptions of Theorem~\ref{thm:StochDissi} hold, i.e. the stochastic OCP \eqref{eq:sOCP} is strictly dissipative in $L^2$. Then the distribution $\DistrStatOptX$ and the feedback $\pi^*(X) = KX$ are a solution of the optimization problem
    \begin{equation} \label{eq:StationaryOptimality}
      \begin{split}
        &\min_{\pi,\varrho_X} \ell(X,U) \\
        s.t. ~ &\exists ~ X \in \Ltwo{\R^n} \sim \varrho_X, ~ W \in \Ltwo{\R^m} \sim \varrho_W \mbox{ independent,}\\
        &\mbox{such that } f(X,U,W) \sim \varrho_X, \mbox{ with } U = \pi \circ X, ~\pi \in \mathbb{F}(\R^n,\R^l).
      \end{split}
    \end{equation}
    Further the stationary distribution $\DistrStatOptX \in  \mathcal{P}_2(\R^n)$ is the unique (partial) solution of this problem, i.e., for every other solution $(\bar{\varrho}_X^s, \bar{\pi}) \in \mathcal{P}_2(\R^n) \times \mathbb{F}(\R^n,\R^l)$ of \eqref{eq:StationaryOptimality} we get $\bar{\varrho}_X^s = \DistrStatOptX$.
  \end{theorem}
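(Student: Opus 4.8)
The plan is to exploit strict dissipativity in $L^2$ to rewrite the stationary cost $\ell(X,U)$ along any feasible stationary pair in terms of the rotated stage cost, and to use the crucial telescoping property of the storage function over one period of stationarity. First I would observe that for any feasible $(\varrho_X,\pi)$ of problem~\eqref{eq:StationaryOptimality}, picking $X \sim \varrho_X$, $U = \pi \circ X$, and $W \sim \varrho_W$ independent, with $f(X,U,W) \sim \varrho_X$, we may embed this stationary pair as a one-step trajectory of the stochastic system and apply the dissipativity inequality~\eqref{eq:sDI} from Theorem~\ref{thm:StochDissi}: for the appropriate time index $k$,
\begin{equation*}
  \ell(X,U) - \ell(\XstatOpt(k),\UstatOpt(k)) + \lambda(k,X) - \lambda(k+1,f(X,U,W)) \geq \alpha\big(\E[\norm{X - \XstatOpt(k)}^2]\big) \geq 0.
\end{equation*}
Summing this along the stationary process $\XstatOpt(\cdot)$ itself first shows, via telescoping of $\lambda$ over $N$ steps and dividing by $N$, that $\ell(X,U) \geq \lim_{N\to\infty} \tfrac1N \sum_{k=0}^{N-1}\ell(\XstatOpt(k),\UstatOpt(k))$ — but because $\XstatOpt(\cdot)$ is stationary, $\ell(\XstatOpt(k),\UstatOpt(k))$ is in fact constant in $k$ (the distribution of $(\XstatOpt(k),\UstatOpt(k))$ does not depend on $k$, and $\ell$ depends only on that distribution), so the right-hand side equals $\ell(\XstatOpt(0),\UstatOpt(0))$. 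Hence $\ell(X,U) \geq \ell(\XstatOpt(0),\UstatOpt(0))$ for every feasible pair, which proves that $(\DistrStatOptX, \pi^*)$ with $\pi^*(X)=KX$ is optimal: feasibility of $(\DistrStatOptX,\pi^*)$ is exactly the content of the stationarity statement in Theorem~\ref{thm:StochDissi}, and along this pair the dissipativity inequality holds with equality (since $X = \XstatOpt(k)$ makes the $\alpha$-term vanish and $\ell = \ell(\XstatOpt(k),\UstatOpt(k))$), so no strictly smaller value is attainable.

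For uniqueness of the partial solution, suppose $(\bar\varrho_X^s, \bar\pi)$ is another solution. Then $\ell(\bar X, \bar U) = \ell(\XstatOpt(0),\UstatOpt(0))$ where $\bar X \sim \bar\varrho_X^s$, $\bar U = \bar\pi \circ \bar X$. Plugging this stationary pair into~\eqref{eq:sDI} and telescoping $\lambda$ over $N$ periods, the left-hand side sums to $N\,\ell(\bar X,\bar U) - \sum_{k=0}^{N-1}\ell(\XstatOpt(k),\UstatOpt(k)) + \lambda(0,\bar X(0)) - \lambda(N,\bar X(N))$, and since $\bar X(\cdot)$ is stationary in $\mathcal P_2(\R^n)$ the boundary storage terms are uniformly bounded; dividing by $N$ and letting $N\to\infty$ forces
\begin{equation*}
  0 \geq \lim_{N\to\infty} \frac1N \sum_{k=0}^{N-1} \alpha\big(\E[\norm{\bar X(k) - \XstatOpt(k)}^2]\big),
\end{equation*}
whence, because $\alpha \in \K_\infty$ and (by stationarity of both processes) each summand equals a single nonnegative constant $\alpha(\E[\norm{\bar X(0) - \XstatOpt(0)}^2])$ — here I would couple $\bar X(\cdot)$ and $\XstatOpt(\cdot)$ driven by the \emph{same} noise realization $W(\cdot)$ so that the error process $\bar X(\cdot) - \XstatOpt(\cdot)$ evolves autonomously and its second moment is indeed $k$-independent — we conclude $\E[\norm{\bar X(0) - \XstatOpt(0)}^2] = 0$, i.e. $\bar X(0) = \XstatOpt(0)$ almost surely on the coupled space, and in particular $\bar\varrho_X^s = \DistrStatOptX$ as distributions (equivalently, $W_2(\bar\varrho_X^s, \DistrStatOptX)=0$).

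The main obstacle I anticipate is the bookkeeping around which time index $k$ and which filtration to use when inserting an abstract feasible stationary pair into the inequality~\eqref{eq:sDI}, which was stated for trajectories of~\eqref{eq:sOCP} adapted to the canonical filtration: one must argue that a feasible $(\varrho_X,\pi,W)$ can be realized as (a segment of) such an adapted trajectory so that the dissipativity inequality genuinely applies, and that the constant $\ell(\XstatOpt(k),\UstatOpt(k))$ is legitimately independent of $k$. A secondary subtlety is justifying that the error second moment is constant along the coupled stationary processes — this needs the shared-noise coupling and the fact that $\bar X - \XstatOpt$ then satisfies a deterministic linear-type recursion with a stationary input distribution, so that Definition~\ref{defn:StationaryPair} applies to the difference as well; once that is in place the $\K_\infty$ argument collapses the time average to a single term and the conclusion is immediate.
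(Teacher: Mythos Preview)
Your optimality argument is essentially the paper's: both telescope the storage along a stationary trajectory, use that $\ell(\XstatOpt(k),\UstatOpt(k))$ is constant in $k$, and let $N\to\infty$ to kill the boundary terms (the paper phrases it as a contradiction, you divide by $N$; same content).

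The uniqueness argument, however, has a real gap. Your claim that the summands $\alpha\big(\E[\norm{\bar X(k)-\XstatOpt(k)}^2]\big)$ are constant in $k$ is false, and your proposed justification does not work. Even with the shared-noise coupling, the difference process is \emph{not} autonomous: the two trajectories are closed with different feedbacks $\bar\pi$ and $\pi^*$, so
\[
  \bar X(k+1)-\XstatOpt(k+1) = A\big(\bar X(k)-\XstatOpt(k)\big) + B\big(\bar\pi(\bar X(k)) - \pi^*(\XstatOpt(k))\big),
\]
which depends on $\bar X(k)$ and $\XstatOpt(k)$ separately, not only on their difference. More decisively, the very next result in the paper (Lemma~5.3) shows that $\norm{\bar X^s_\star(k)-\XstatOpt(k)}_{L^2}\to 0$ as $k\to\infty$, so the $L^2$ error is in general \emph{not} constant along stationary solutions. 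Consequently your conclusion $\bar X(0)=\XstatOpt(0)$ a.s.\ is too strong and cannot be obtained this way.

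The paper's fix is simple and worth adopting: instead of claiming the $L^2$ distance is constant, bound it below by the Wasserstein distance,
\[
  \E\big[\norm{\bar X(k)-\XstatOpt(k)}^2\big] \;\ge\; W_2(\bar\varrho_X^s,\DistrStatOptX)^2 \;=:\; C_W^2,
\]
which \emph{is} constant in $k$ because both marginals are stationary. Then the telescoped inequality (without dividing by $N$) reads $C_\lambda \ge \sum_{k=0}^{N-1}\alpha(C_W^2) = N\,\alpha(C_W^2)$, forcing $C_W=0$ and hence $\bar\varrho_X^s=\DistrStatOptX$. This yields only equality of distributions, not almost-sure equality of the random variables --- which is exactly the theorem's assertion.
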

  \begin{proof}
    We prove the claim by contradiction. 
    We first observe that the stage cost $\ell(X,U)$ from \eqref{eq:stochStagecost} only depends on the distribution $\varrho_X$ of $X$ since the distribution of $U=\pi \circ X$ with $\pi \in \mathbb{F}(\R^n,\R^l)$ is determined by $\varrho_X$. 
    Thus, problem \eqref{eq:StationaryOptimality} is well posed and for the stationary pair $(\XstatOpt\args,\UstatOpt\args)$ from Theorem~\ref{thm:StochDissi} with $\XstatOpt(k) \sim \DistrStatOptX$ and $\UstatOpt(k) = \pi^* \circ \XstatOpt(k)$ we can define $C_{\ell}^{\star} := \ell(\XstatOpt(k),\UstatOpt(k))$, which is constant for all $k \in \N_0$ because of the stationarity of $\XstatOpt\args$.
    Now assume that there are $\bar{\varrho}_X^s$ and $\bar{\pi}$ solving \eqref{eq:StationaryOptimality} such that $\ell(\bar{X},\bar{U}) \leq C_{\ell}^{\star} $ for $\bar{X} \sim \bar{\varrho}_X^s$ and $\bar{U} = \bar{\pi}(\bar{X})$. 
    We know that the system 
    \begin{equation} \label{eq:SystemStationaryBar}
            \bar{X}^s(k) = f(\bar{X}^s(k),\bar{U}^s(k),W(k)), \quad \bar{U}^s(k) = \bar{\pi}\circ \bar{X}^s(k) 
    \end{equation}
    defines a time-homogeneous Markov chain for all \emph{i.i.d} sequences $\{W(k) \}_{k \in \N}$ with $W(k)$ stochastically independent of $\bar{X}^s(k)$. Thus, for all these sequences $\{W(k) \}_{k \in \N}$ there exists a transition operator $\mathcal{T}_{\bar{\pi}}$ only depending on the distribution $\rho_W$ of the noise such that 
    \begin{equation} \label{eq:transitionOperator}
        \varrho_{\bar{X}^s}(k+1) = \mathcal{T}_{\bar{\pi}} \varrho_{\bar{X}^s}(k)
    \end{equation}
    where $\bar{X}^s(k) \sim \varrho_{\bar{X}^s}(k)$ and $\bar{X}^s(k+1) \sim \varrho_{\bar{X}^s}(k+1)$. 
    Moreover, by the stationarity condition of \eqref{eq:StationaryOptimality} it directly follows that $\bar{\varrho}_X^s$ is a steady state of the transition operator $\mathcal{T}_{\bar{\pi}}$, i.e., $\bar{\varrho}_X^s = \mathcal{T}_{\bar{\pi}} \bar{\varrho}_X^s$, and, thus, for every $X \sim \bar{\varrho}_X^s$ and $W \sim \varrho_W$ with $X$ and $W$ independent it follows $f(X,\bar{\pi}(X),W) \sim \bar{\varrho}_X^s$.
    Then because the constraints of \eqref{eq:StationaryOptimality} ensure that at least one $\bar{X}^s_0 \sim \bar{\varrho}_X^s$ exists, system \eqref{eq:SystemStationaryBar} defines a stationary pair according to Definition~\ref{defn:StationaryPair} with $\ell(\bar{X}^s(k),\bar{U}^s(k)) = \ell(\bar{X},\bar{U}) =: \bar{C}_{\ell}$.
    Hence, using the rotated costs from Definition~\ref{defn:rotatedCost} we can conclude that 
    \begin{equation} \label{eq:rotatedCostContradict}
      \begin{split}
        \tilde{J}_N(\bar{X}^s(k),\bar{U}^s\args) =& \sum_{k=0}^{N-1} (\bar{C}_{\ell} - C_{\ell}^{\star}) + \lambda(0,\bar{X}^s(0)) - \lambda(N,\bar{X}^s(N)) \\
        \leq& N (\bar{C}_{\ell} - C_{\ell}^{\star}) + C_{\lambda} \leq C_{\lambda} 
      \end{split}
    \end{equation}
    holds for all $N \in \N$ with $\lambda$ from Theorem \ref{thm:StochDissi} and $C_{\lambda} := \lambda(0,\bar{X}^s(0)) - M$ where $M \in \R$ is a lower bound on $\lambda$. 
    Let us now first assume that $\bar{C}_{\ell} < C_{\ell}^{\star}$. Then from equation \eqref{eq:rotatedCostContradict} we get that there is an $N_0 \in \N$ such that $\tilde{J}_{N_0}(\bar{X}^s(k),\bar{U}^s\args) < 0$, which is a contradiction, since $\tilde{J}_{N_0}(\bar{X}^s(k),\bar{U}^s\args) \geq 0$ must hold because of the dissipativity of problem \eqref{eq:sOCP}. Thus, $(\DistrStatOptX, \pi^*)$ is a solution of the optimization problem \eqref{eq:StationaryOptimality} and we can conclude that $\bar{C}_{\ell} = C_{\ell}^{\star}$ for all solutions of \eqref{eq:StationaryOptimality}.\\
    To prove the uniqueness of the stationary distribution $\DistrStatOptX$ let us assume that $(\bar{\varrho}_X^s, \bar{\pi})$ is a solution of \eqref{eq:StationaryOptimality} with $\bar{\varrho}_X^s \neq \DistrStatOptX$.
    Then, we know by the strict dissipativity of the stochastic OCP from Theorem~\ref{thm:StochDissi} that $\tilde{\ell}(\bar{X}^s(k),\bar{U}^s(k)) \geq \alpha(\Vert \bar{X}^s(k) - \XstatOpt(k) \Vert_{L^2}^2)$ holds for some $\alpha \in \K_{\infty}$ and, thus, $ \tilde{J}_N(X_0,\bar{U}^s\args) \geq N \alpha(\Vert \bar{X}^s(k) - \XstatOpt(k) \Vert_{L^2}^2)$ for all $N \in \N$. 
    Further, we know, per the definition of the Wasserstein metric that 
    $$\Vert \bar{X}^s(k) - \XstatOpt(k) \Vert_{L^2} \geq W_2( \bar{X}^s(k) - \XstatOpt(k)) := C_W > 0$$
    where $C_W$ is constant for all $k \in \N_0$ because of the stationarity conditions. Therefore, inequality \eqref{eq:rotatedCostContradict} implies $N \alpha((C_W)^2) \leq C_{\lambda}$ for all $N \in \N$, which yields a contradiction since $\alpha((C_W)^2) > 0$ and thus proves the claim.
  \end{proof}

  \begin{remark} \label{rem:StationaryExpectation}
    In \cite[Problem~O]{Sun2022} the authors show that the optimal stationary expectation can be obtained by solving a stationary optimization problem over $\R^n \times \R^l$. This approach is related to our result from Theorem~\ref{thm:StationaryOptimality}, since it gives us a way to compute the expectation of the stationary distribution characterized by the minimizer of problem \eqref{eq:StationaryOptimality}.
    To observe this, we first note that by the identity $X^s_{\star}(\cdot) = \hat{X}^s_{\star}(\cdot) + \tilde{x}^s_{\star}+x^s$ from equation \eqref{eq:shiftStatinaryProcess} with $\E[\hat{X}^s_{\star}(\cdot)] = 0$, we get that $\E[X^s_{\star}(k)] = \tilde{x}^s_{\star}+x^s$ and $\E[U^s_{\star}(k)] = \tilde{u}^s_{\star}+u^s$ holds for all $k \in \N$. Furthermore, 
    by the arguments of the proof of Lemma~\ref{lem:DissiStochDet} we know that $(\tilde{x}^s_{\star},\tilde{u}^s_{\star})$ can be computed by solving 
    \begin{equation}
        \min_{(x,u) \in \R^n \times \R^l} \hat{\ell}(x,u), \quad \text{s.t.}~x-Ax-Bu=0
    \end{equation}
    since $h_{\tilde{\gamma}}(x,u)=\hat{\ell}(x,u)$ for all $(x,u) \in \R^n \times \R^l$ with $x-Ax-Bu=0$, cf.\ equation \eqref{eq:costEqu}.
    Thus, the pair $(\E[X^s_{\star}(k)],\E[U^s_{\star}(k)]) = (\tilde{x}^s_{\star}+x^s,\tilde{u}^s_{\star}+u^s)$ can be obtained by solving 
    \begin{equation}
        \min_{(x,u) \in \R^n \times \R^l} \ell(x,u), \quad \text{s.t.}~x-Ax-Bu-E\mu_w-z=0,
    \end{equation}
    which is the discrete-time version of \cite[Problem~O]{Sun2022} in our setting. However, we want to emphasize that unlike Theorem~\ref{thm:StationaryOptimality} this characterization of the expected values strongly relies on the linear-quadratic structure of the problem and not merely on its dissipativity.
  \end{remark}

  \begin{remark} \label{rem:StationarityGrosZanon}
      Note that we can rewrite the optimization problem \eqref{eq:StationaryOptimality} in terms of the transition operator $\mathcal{T}_{\pi}$ from \eqref{eq:transitionOperator} as 
      \begin{equation}
          \begin{split}
              &\min_{\pi, \varrho_X} \ell(X,U) \\
              s.t.~& X \sim \varrho_X, ~ U = \pi \circ X, \quad  \varrho_X = \mathcal{T}_{\pi} \varrho_X \\
              &\varrho_X \in \mathcal{M}, ~ \pi \in \mathbb{F}(\R^n,\R^l), \\
          \end{split}
      \end{equation}
      where $\mathcal{M}$ is the set of all probability measures $\varrho_X \in \mathcal{P}(\R^n)$ such that there is a random variable $X \in \Ltwo{\R^n}$ with $X \sim \varrho$. Hence, the distributions and the feedback law defining our stationary pair can be characterized by the same optimization problem as the steady state in \cite{Gros2022}. This result can also be seen as a generalization of the uniqueness result of the optimal equilibrium for strictly dissipative deterministic optimal control problems.
  \end{remark}

  Theorem \ref{thm:StationaryOptimality} has shown that the distribution of the stationary state process is unique and can be computed by solving a stationary optimization problem. In general, however, we cannot conclude the uniqueness in $L^2$. Still, we can conclude that the $L^2$-distance between two stationary state processes has to become arbitrarily small for $k \to \infty$. This means that the pathwise behavior of different stationary pairs is nearly identical in the long run, which is sufficient for our turnpike analysis in Section~\ref{sec:Turnpike} since turnpike properties are associated with the long-time behavior of optimal solutions. The following lemma formalizes this convergence.

  \begin{lemma}
    Let the assumptions of Theorem~\ref{thm:StochDissi} hold and let  $(\bar{\varrho}_X^s,\bar{\pi})$ be a solution of the optimization problem \eqref{eq:StationaryOptimality}. Then for all stationary pairs $(\XstatOptbar\args,\UstatOptbar\args)$ in feedback form according to Definition \ref{def:statfeed} with $\UstatOptbar(k) = \bar{\pi} \circ \XstatOptbar(k)$ and $\XstatOptbar(k) \sim \bar{\varrho}^s_X$ we get the convergence 
    \begin{equation}
      \Vert \XstatOptbar(k) - \XstatOpt(k) \Vert_{L^2} \to0 \mbox{ for } k \to \infty.
    \end{equation}
  \end{lemma}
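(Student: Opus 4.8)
The plan is to exploit strict dissipativity in $L^2$ together with the fact that, by Theorem~\ref{thm:StationaryOptimality}, any stationary pair $(\XstatOptbar\args,\UstatOptbar\args)$ arising from a solution $(\bar\varrho_X^s,\bar\pi)$ of \eqref{eq:StationaryOptimality} necessarily has $\bar\varrho_X^s = \DistrStatOptX$. In particular $\XstatOptbar(k)$ and $\XstatOpt(k)$ have the \emph{same} distribution for every $k$, so the rotated stage cost $\tilde\ell(k,\XstatOptbar(k),\UstatOptbar(k))$ equals $\ell(\XstatOptbar(k),\UstatOptbar(k)) - \ell(\XstatOpt(k),\UstatOpt(k)) + \lambda(k,\XstatOptbar(k)) - \lambda(k+1,\XstatOptbar(k+1))$, where the first difference vanishes because $\ell$ depends only on the distribution of the state (and the feedback $\bar\pi$ is fixed along the chain). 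Hence $\tilde\ell(k,\XstatOptbar(k),\UstatOptbar(k)) = \lambda(k,\XstatOptbar(k)) - \lambda(k+1,\XstatOptbar(k+1))$ is a pure telescoping term.

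Next I would sum the dissipativity inequality along the trajectory of $\XstatOptbar\args$. Applying \eqref{eq:sDI} with $(X(k),U(k)) = (\XstatOptbar(k),\UstatOptbar(k))$ and telescoping the storage terms gives, for every $N\in\N$,
\begin{equation*}
  \sum_{k=0}^{N-1} \alpha\!\left( \norm{\XstatOptbar(k) - \XstatOpt(k)}_{L^2}^2 \right)
  \le \sum_{k=0}^{N-1} \tilde\ell(k,\XstatOptbar(k),\UstatOptbar(k))
  = \lambda(0,\XstatOptbar(0)) - \lambda(N,\XstatOptbar(N)) \le \lambda(0,\XstatOptbar(0)) - M,
\end{equation*}
where $M$ is the uniform lower bound on $\lambda$ from Theorem~\ref{thm:StochDissi}. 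The right-hand side is a finite constant independent of $N$, so the non-negative series $\sum_{k=0}^\infty \alpha(\norm{\XstatOptbar(k)-\XstatOpt(k)}_{L^2}^2)$ converges. Convergence of a non-negative series forces its general term to tend to zero, i.e. $\alpha(\norm{\XstatOptbar(k)-\XstatOpt(k)}_{L^2}^2)\to 0$ as $k\to\infty$. Since $\alpha\in\K_\infty$ is continuous, strictly increasing, with $\alpha(0)=0$, its inverse $\alpha^{-1}$ is continuous at $0$ with $\alpha^{-1}(0)=0$, so $\norm{\XstatOptbar(k)-\XstatOpt(k)}_{L^2}^2\to 0$, and therefore $\norm{\XstatOptbar(k)-\XstatOpt(k)}_{L^2}\to 0$, which is the claim.

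The one point that needs care — and which I expect to be the main obstacle — is justifying the equality $\ell(\XstatOptbar(k),\UstatOptbar(k)) = \ell(\XstatOpt(k),\UstatOpt(k))$ and the fact that $\lambda(k,\XstatOptbar(k)) - \lambda(k+1,\XstatOptbar(k+1))$ actually telescopes to something bounded. The first equality holds because $\ell$ as defined in \eqref{eq:stochStagecost} depends on $(X,U)$ only through the distribution of $X$ (via expectations and covariances of $X$ and of $U=\pi\circ X$ with the \emph{same} $\pi=\bar\pi$), and Theorem~\ref{thm:StationaryOptimality} guarantees $\XstatOptbar(k)\sim\DistrStatOptX\sim\XstatOpt(k)$; the optimality value of \eqref{eq:StationaryOptimality} being the common constant $C_\ell^\star$ then gives $\ell(\XstatOptbar(k),\UstatOptbar(k)) = C_\ell^\star = \ell(\XstatOpt(k),\UstatOpt(k))$ for all $k$. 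For the telescoping bound, $\lambda(N,\XstatOptbar(N))\ge M$ uniformly in $N$ by the uniform lower bound in Definition~\ref{defn:StochDissi}, while $\lambda(0,\XstatOptbar(0))$ is a fixed finite number once the initial distribution is fixed; this suffices to bound the partial sums uniformly in $N$. No further regularity is needed, so the argument closes.
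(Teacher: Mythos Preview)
Your proposal is correct and follows essentially the same route as the paper: bound the rotated cost $\tilde J_N$ along $(\XstatOptbar\args,\UstatOptbar\args)$ above by a constant (via $\bar C_\ell = C_\ell^\star$ and the telescoping storage terms with uniform lower bound $M$), bound it below by $\sum_{k=0}^{N-1}\alpha(\|\XstatOptbar(k)-\XstatOpt(k)\|_{L^2}^2)$ via strict dissipativity, and conclude that the summable nonnegative terms must vanish. One small remark: your first justification that ``$\ell$ depends only on the distribution of the state with the same $\bar\pi$'' does not by itself yield $\ell(\XstatOptbar(k),\UstatOptbar(k))=\ell(\XstatOpt(k),\UstatOpt(k))$, since $\UstatOpt$ uses $\pi^\star$ rather than $\bar\pi$; the equality follows, as you correctly note afterwards, from both $(\bar\varrho_X^s,\bar\pi)$ and $(\DistrStatOptX,\pi^\star)$ being minimizers of \eqref{eq:StationaryOptimality} and hence attaining the common optimal value $C_\ell^\star$---which is exactly how the paper argues as well.
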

  \begin{proof}
    Using the same notation and arguments as in the proof of Theorem~\ref{thm:StationaryOptimality}, we obtain
    \begin{equation} \label{eq:upperBoundRC}
      \tilde{J}_N(\bar{X}^s_{\star}(0),\bar{U}^s_{\star}\args) \leq N (\bar{C}_{\ell} - C_{\ell}^{\star}) + C_{\lambda} = C_{\lambda} 
    \end{equation}
    for all $N\in \N$ where $\bar{C}_{\ell} = C_{\ell}^{\star}$ because $(\bar{\varrho}_X^s,\bar{\pi})$ is a solution of the optimization problem \eqref{eq:StationaryOptimality}. Further, we know by the strict dissipativity of the stochastic OCP that for all $N \in \N$, the estimate 
    \begin{equation} \label{eq:lowerBoundRC}
      \tilde{J}_N(\bar{X}^s_{\star}(0),\bar{U}^s_{\star}\args) \geq \sum_{k=0}^{N-1} \alpha(\Vert \XstatOptbar(k) - \XstatOpt(k) \Vert_{L^2}^2)
    \end{equation}
    holds. Thus, by combining equations \eqref{eq:upperBoundRC} and \eqref{eq:lowerBoundRC} we get 
    \begin{equation*}
      \sum_{k=0}^{N-1} \alpha(\Vert \XstatOptbar(k) - \XstatOpt(k) \Vert_{L^2}^2) \leq C_{\lambda}
    \end{equation*}
    for all $N \in \N$, which directly implies $\Vert \XstatOptbar(k) - \XstatOpt(k) \Vert_{L^2} \to 0$ for $k \to \infty$.
  \end{proof}

\section{Numerical examples} \label{sec:Examples}
  
  In this section, we illustrate the theoretical results of this paper. To do this, we consider the stochastic OCP
  \begin{equation} \label{example}
      \begin{split}
          \min_{U\args} &\sum_{k=0}^{N-1} \Exp{X_1(k)^2 + 5X_2(k)^2 + U(k)^2 + X_1(k) - 0.5 U(k)} + \gamma \V[X_1(k)] \\
          &s.t. ~ X(k+1) = 
            \left( \begin{array}{cc}
            1.12 & 0\phantom{.00} \\
            0.26 & 0.88 \end{array} \right) X(k)
            + 
            \left( \begin{array}{r}
            0.05 \\
            -0.05 \end{array} \right) U(k) + W(k)
      \end{split}
  \end{equation}
  with $\gamma = 5$ and initial condition $X(0) = X_0 \sim \Normal\left((0.5,0.8)^T, \mbox{diag}(0.5^2,0.8^2)\right)$, which is a modified version of \cite[Section 3.3]{Ou2021}. 

  We consider two settings: Gaussian distributed  and gamma distributed noise.
  We first consider the case that $W(k) \sim \Normal\left((0.2,0.2)^T, \mbox{diag}(0.03,0.03)\right)$ for all $k=0,\ldots,N-1$. Then the corresponding stationary pair from Theorem~\ref{thm:StochDissi} is given by
  \begin{align*}
      \XstatOpt(k+1) &= \left( \begin{array}{cc}
            1.12 & 0\phantom{.00} \\
            0.26 & 0.88 \end{array} \right) \XstatOpt(k)
            + 
            \left( \begin{array}{r}
            \phantom{-}0.05 \\
            -0.05 \end{array} \right) \UstatOpt(k) + W(k) \\
        \UstatOpt(k) &= K \left( \XstatOpt(k) - (-1.116, -0.199)^T \right) - 1.323
  \end{align*}
  with $K = -(7.679, -0.388)$ and initial distribution $\XstatOpt(0) \sim \Normal(\mu^s, \Sigma^s)$ where
  \begin{equation} \label{example:stationaryMoments}
      \mu^s = (-1.116, -0.199)^T, ~\mbox{and}~ \Sigma^s = \left( \begin{array}{rr}
            0.063 & 0.054\\
            0.054 &  0.619 \end{array} \right)
  \end{equation}
  (all numbers are rounded to four digits). 
  Here we have obtained $K$ by solving the Riccati equation \eqref{eq:algRiccati} and the expectation $\mu_s$ as well as the linear shift in $\UstatOpt(k)$ are determined as explained in Remark~\ref{rem:StationaryExpectation}. The covariance $\Sigma^s$ is then computed by solving the Lyapunov-equation $\Sigma^s = (A+BK) \Sigma^s (A+BK)^T + \Sigma_W$, cf.\ \cite[Lemma~3]{CDCPaper}. Note that in general this approach is not sufficient to fully characterize the stationary distribution, but since here we are considering a purely Gaussian setting, the stationary distribution must be Gaussian again due to the linearity of the system and is thus fully characterized by its first two moments.
  To visualize the different turnpike behaviors defined in Section~\ref{sec:Turnpike}, we solve OCP~\eqref{example} over different optimization horizons $N=20,30,\ldots,80$. To this end, we use Polynomial Chaos Expansions (PCE) and its \emph{Julia} implementation \emph{PolyChaos.jl} \cite{Muehlpfordt2020} for obtaining the solutions numerically. For an introduction to PCE we refer to \cite{Sullivan2015} and for more details on its usage for our simulations to \cite{Ou2021}. Figure~\ref{fig:CSTRGaussianDiffExp} shows the mean-square distance between the stationary pair $(\XstatOpt\args,\UstatOpt\args)$ and the solutions of OCP~\eqref{example} where we can clearly observe the turnpike property from Theorem~\ref{thm:TurnpikeL2}. Here, we approximated the mean-square distance via Monte-Carlo simulations
  \begin{equation*}
    \Exp{\Vert X_U(k,X_0) - \XstatOpt(k) \Vert^2} \approx \frac{1}{n} \sum_{j=1}^n \Vert X_U(k,X_0,w_j) - \XstatOpt(k,w_j) \Vert^2
  \end{equation*}
  and
  \begin{equation*}
       \Exp{\Vert U(k) - \UstatOpt(k) \Vert^2} \approx \frac{1}{n} \sum_{j=1}^n \Vert U(k,w_j) - \UstatOpt(k,w_j) \Vert^2
  \end{equation*}
  where the PCE solutions were sampled for $n=10^4$ noise realizations $w_j(0),\ldots$, $w_j(N-1)$, $j=0,\ldots,n$ and with stochastically independent initial values $X_0$ and ${\XstatOpt}_0$.

\begin{figure}[ht]
    \begin{center}
        \includegraphics[width=1\textwidth]{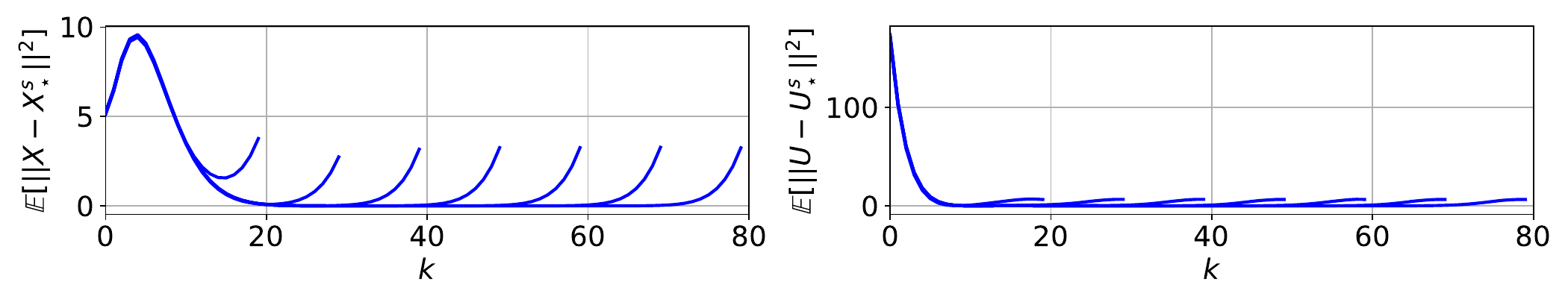}
        \vspace{-0.8cm}
        \caption{Mean-square distance between the stationary pair and the optimal solutions on different horizons. \label{fig:CSTRGaussianDiffExp}} 		
    \end{center}
\end{figure}

    To illustrate the pathwise turnpike behavior of the solutions, we consider two fixed but different realization paths of $W(\cdot)$ given by $w_{i} = (w_{i}(0), \ldots, w_{i}(N-1))$ for $i=1,2$. We simulate the states and controls according to equation \eqref{example} and the optimal stationary pair with $W(k) = w_i(k)$ and with random initial values following the desired distributions. Figure~\ref{fig:CSTRGaussianSample2Rows} shows the results for the  fixed noise-sequences $w_{i}$ in each row and the resulting turnpike behavior of the realization. Notice that Figure~\ref{fig:CSTRGaussianSample2Rows} is not a direct visualization of the pathwise turnpike in probability from Theorem~\ref{thm:TurnpikeProb} since a single realization is a null set in our probabilistic sense. However, it illustrates the pathwise behavior and, thus, it supports Theorem~\ref{thm:TurnpikeProb} being valid.

\begin{figure}[ht]
    \begin{center}
        \includegraphics[width=1\textwidth]{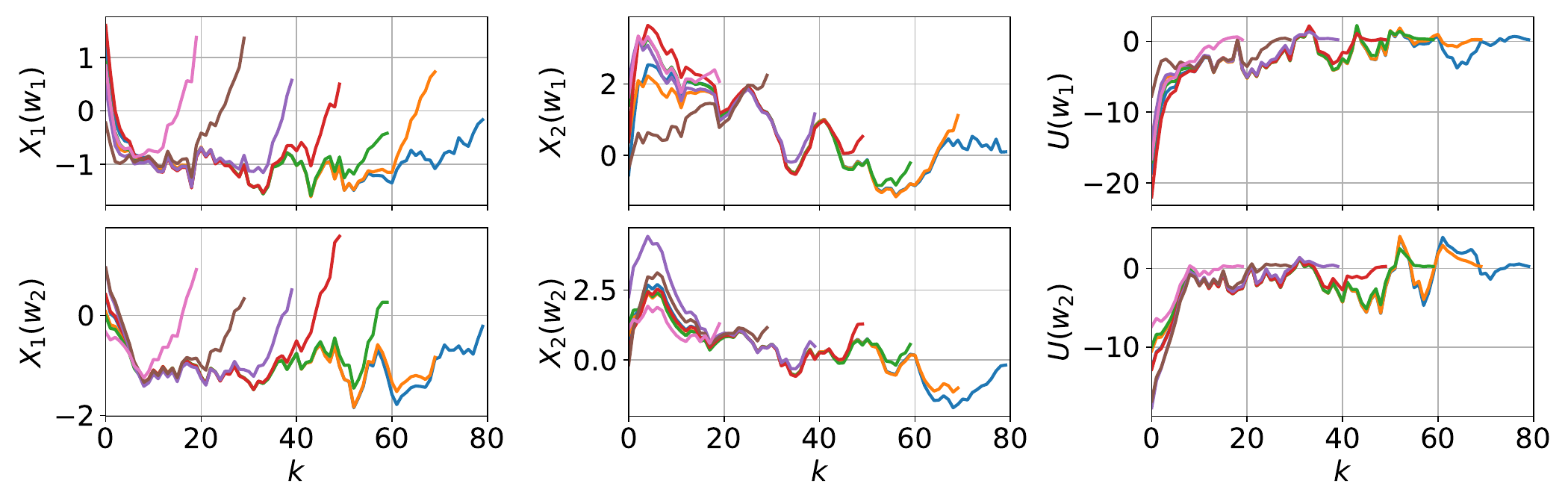}
        \vspace{-0.8cm}	
        \caption{State and control trajectories for different initial conditions and horizons with one identical Gaussian noise realization in each row. \label{fig:CSTRGaussianSample2Rows}}
    \end{center}
\end{figure}

    Since for Gaussian distributions the exact calculation of the Wasserstein distance of order $2$ is possible, we also plot this distance between the solutions of the OCP~\eqref{example} and the stationary pair $(\XstatOpt\args,\UstatOpt\args)$. The results are shown in Figure~\ref{fig:CSTRGaussianWasserstein}. Again we can observe that the system exhibits the turnpike property, here in distribution as defined in Theorem~\ref{thm:TurnpikeDistr}.

\begin{figure}[ht]
    \begin{center}
        \includegraphics[width=1\textwidth]{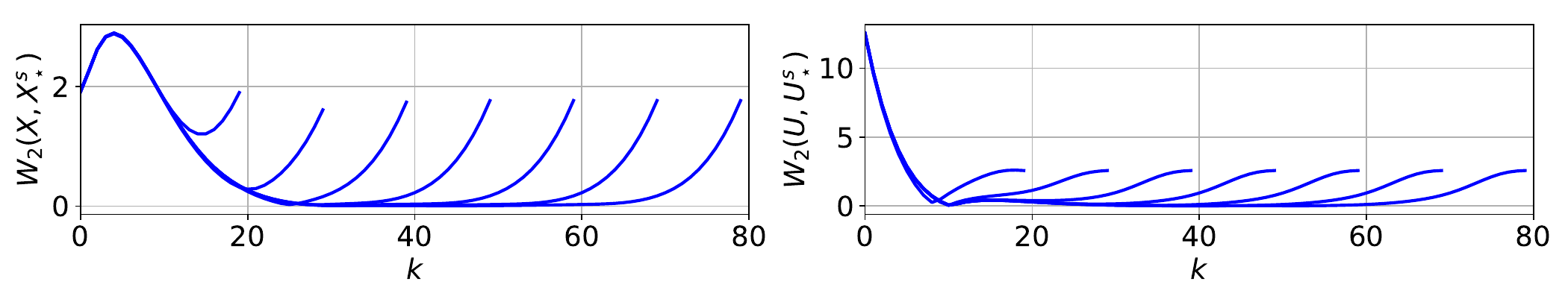}
        \vspace{-0.8cm}
        \caption{Wasserstein distance of order $2$ between the stationary distribution and the distribution of the optimal trajectories on different horizons $N$. \label{fig:CSTRGaussianWasserstein}}	
    \end{center}
\end{figure}

    As we have shown in Theorem~\ref{thm:TurnpikeMoments}, the turnpike behavior in distribution with respect to the Wasserstein distance allows us to make additional statements concerning the first two moments of the solutions. Figure~\ref{fig:CSTRGaussianMoments} depicts the evolution of the expectation and the variance of the solutions for different horizon lengths together with the corresponding moment of the stationary distribution. Once more, we can observe that the trajectories spend most of their time near the stationary solution, which visualizes the turnpike property from Theorem~\ref{thm:TurnpikeMoments}.

\begin{figure}[ht]
    \begin{center}
        \includegraphics[width=1\textwidth]{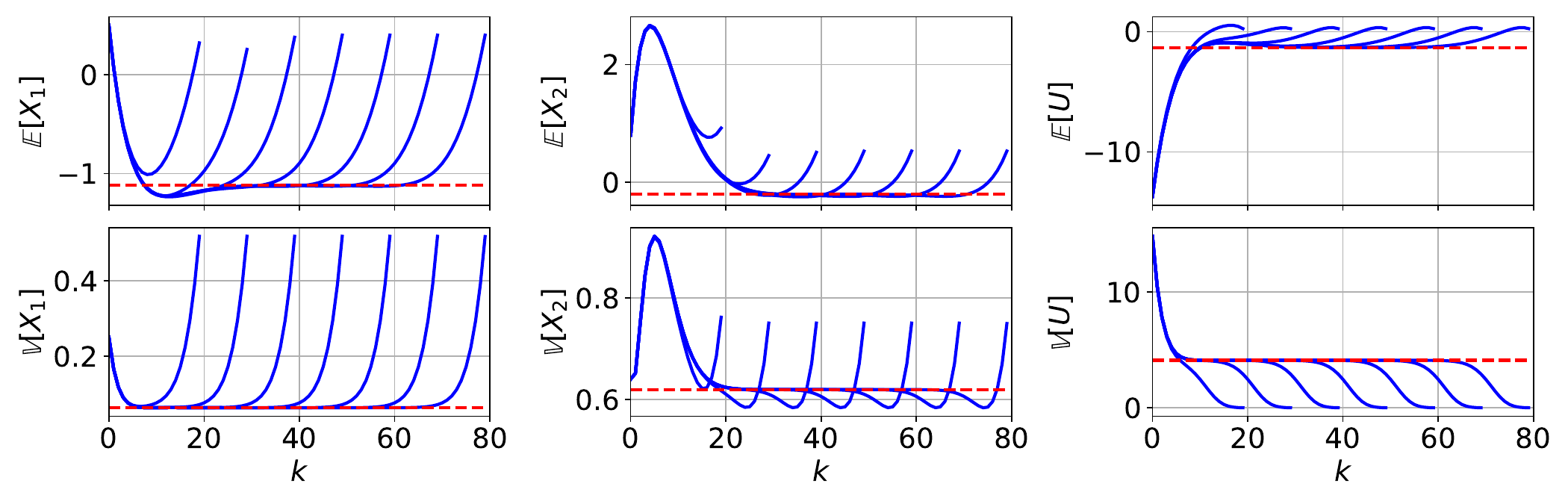}
        \vspace{-0.8cm}
        \caption{Evolution of the expectations and variances for different horizons $N$ (solid blue) together with the corresponding moment of the stationary distribution (dashed red). \label{fig:CSTRGaussianMoments}}
    \end{center}
\end{figure}

    Next we turn to the case of non-Gaussian disturbances to demonstrate that our theoretical results remain applicable. Specifically, we consider the stochastic OCP \eqref{example} but now with 
    $W(k)=(W_1(k),W_2(k))^T$, $W_1(k),W_2(k) \sim \Gamma(1.33,0.15)$
    and $\covar(W_1(k),W_2(k)) = 0$ for $k=0,\ldots,N-1$. 
    First, we plot solutions for two fixed realizations of the gamma-distributed noise, see Figure~\ref{fig:CSTRGammaSample2Rows}. 
    Once again, we observe the pathwise turnpike property analogous to Figure~\ref{fig:CSTRGaussianSample2Rows}, which illustrates that the turnpike property from Theorem~\ref{thm:TurnpikeProb} also holds for non-Gaussian distributions.
    Unfortunately, in this example the numerical validation of the mean-square turnpike and the distributional behavior is quite challenging.
    This is due to the difficulty of computing the distributions of the stationary pair, since these distributions are now not fully characterized by their first two moments, as in the Gaussian setting of the previous example. Thus, we would have to solve the problem \eqref{eq:StationaryOptimality} explicitly, which is in general very hard, since we have to perform a minimization over the infinite dimensional spaces of probability measures $\mathcal{M}$ and feedbacks $\mathbb{F}(\R^n,\R^l)$. 
    Hence, we do not attempt to plot the counterparts of  quantities shown in Figure~\ref{fig:CSTRGaussianDiffExp} and \ref{fig:CSTRGaussianWasserstein} in the non-Gaussian setting. However, via PCE we can calculate the first two moments of the stationary distribution.
    Note that these stationary moments are the same as in equation \eqref{example:stationaryMoments}, since our gamma-distributed noise has the same expectation and variance as the Gaussian noise before, and the evolution of the moments does only depend on the moments of the noise and not on its exact distribution. Thus, we get the same results as shown in Figure~\ref{fig:CSTRGaussianMoments}.

    \begin{figure}[ht]
        \begin{center}
            \includegraphics[width=1\textwidth]{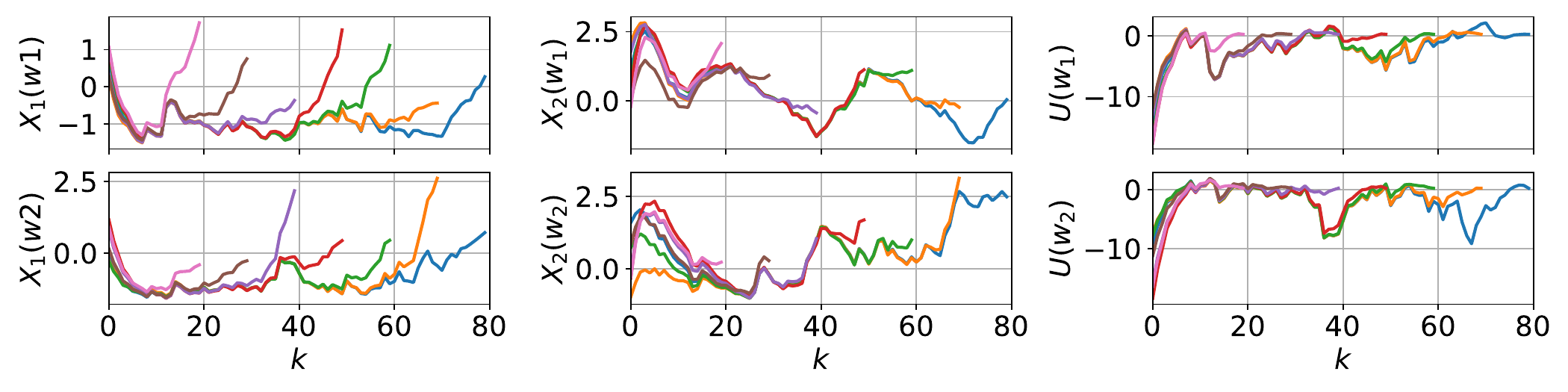}
            \vspace{-0.8cm}
            \caption{State and control trajectories for different initial conditions and horizons with one identical gamma-distributed noise realization in each row.}
            \label{fig:CSTRGammaSample2Rows}
        \end{center}
    \end{figure}

    In order to give an alternative way of illustrating the distributional turnpike property without knowledge of the stationary distribution, Figure~\ref{fig:CSTRGammaDist} illustrates the evolution of the probability density functions (PDFs) defining the marginal distributions of $X_1$ and $X_2$ for the horizon $N=80$. We can observe that the PDFs are almost constant in the middle of the time horizon which is a clear indicator of a turnpike property in distribution. Moreover, observe the asymmetric nature of the PDF for $X_1$.

    \begin{figure}[ht]
    \centering
        \begin{minipage}[t]{.45\textwidth}
            \centering
            \includegraphics[width=\textwidth,trim={30mm 28mm 9mm 38mm},clip]{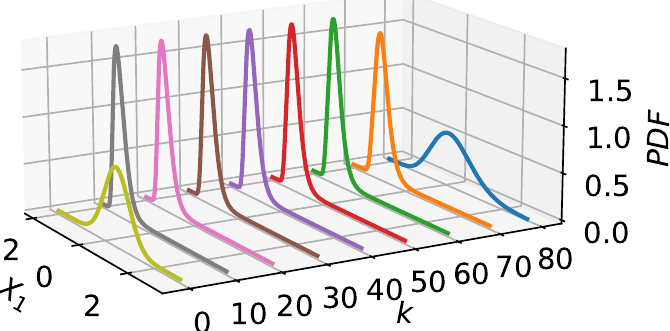}
        \end{minipage}
        \begin{minipage}[t]{.45\textwidth}
            \centering
            \includegraphics[width=\textwidth,trim={30mm 28mm 9mm 38mm},clip]{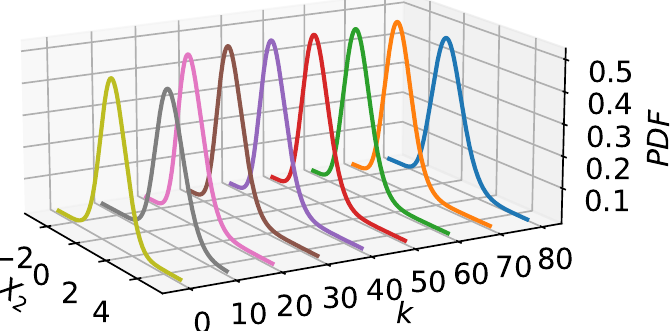}
        \end{minipage}  
        \label{fig:CSTRGammaDist}
        \caption{Evolution of the PDFs defining the marginal distributions of the states on time horizon $N=80$ for gamma-distributed noise.}
    \end{figure}

    Additionally, Figure~\ref{fig:DistComparison} compares the PDFs in the Gaussian and gamma-distributed setting for the solutions on horizon $N=80$ at time $k=40$. Given that this is the middle of the time horizon, it can be reasonably assumed that these PDFs are a good approximation of the stationary distributions according to the definition of the turnpike property. As we can see, in the gamma-distributed setting the stationary distribution is asymmetric and differs significantly from the Gaussian one. Thus, we can observe that the explicit behavior of the solutions depends on the specific distribution of the noise although the proofs in Section~\ref{sec:Dissi} and Section~\ref{sec:Turnpike} do not take them directly into account.

\begin{figure}[ht]
    \begin{center}
        \includegraphics[width=1\textwidth]{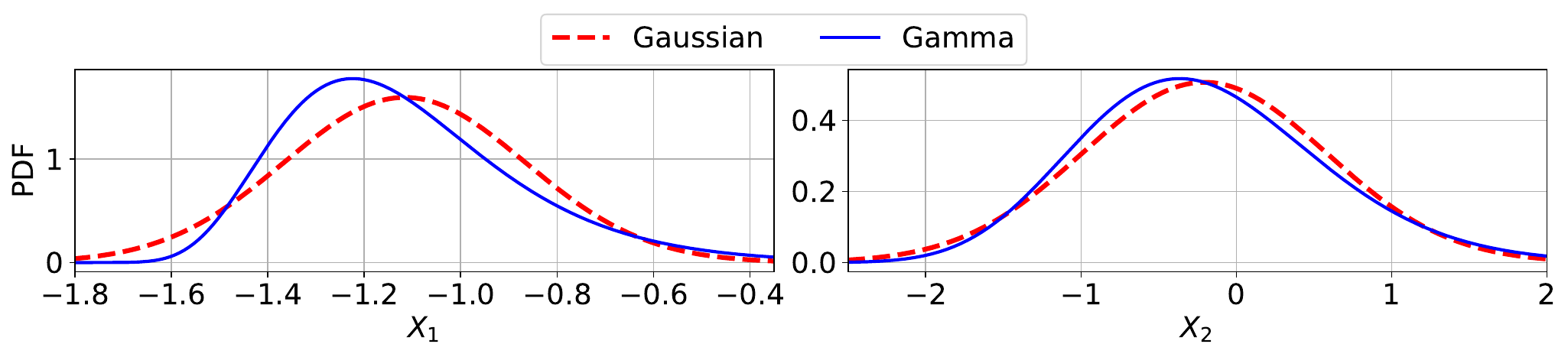}
        \vspace{-0.8cm}
        \caption{Comparison of the PDFs defining the marginal distributions of the states at $k=40$ for Gaussian and gamma-distributed noise.
        \label{fig:DistComparison}}
    \end{center}
\end{figure}

\section{Conclusion and outlook} \label{sec:Conclusion}

In this paper, we have proven that the mean-square dissipativity property holds for a generalized version of the stochastic linear-quadratic optimal control problem by explicitly constructing a storage function for them. Further, we have shown that a pair of stochastic stationary processes replaces the deterministic steady state in our time-varying dissipativity notion and that a stationary optimization problem characterizes the distribution of this pair. Moreover, we have shown that we can conclude several turnpike properties from our dissipativity notion. These turnpike properties include the mean-square turnpike behavior, the pathwise turnpike in probability, the turnpike in distribution with respect to the Wasserstein distance, and the turnpike of the expectation and variance. These types of turnpike behaviors were also illustrated by numerical simulations for a two-dimensional system.

Since \cite{Sun2022,Sun2023} show that the turnpike property also holds for multiplicative noise, a question for future research would be if we could obtain similar discrete-time results using dissipativity-based approaches.
Moreover, it would be interesting to investigate if we could show that the mean-square dissipativity also holds for more general problems, which consider, for example, economic costs, nonlinear dynamics, or constraints. 
Developing efficient methods to approximate the stationary distribution would also be important since this can already be challenging in the linear-quadratic setting as described in Section~\ref{sec:Examples}. 
For deterministic optimal control problems, it is known that under suitable technical assumptions the turnpike property and strict dissipativity are actually equivalent \cite{Gruene2016}. It would be very interesting to investigate whether this is also true for the stochastic setting in the $L^2$ sense. Further, the usage of our results in the context of model predictive control should be investigated. 
Finally, although Remark~\ref{rem:StationarityGrosZanon} gives a first hint on the link between our dissipativity notion and the one presented in \cite{Gros2022}, the exact connection remains an open question. 

\section*{Acknowledgment} We thank the anonymous reviewers for their valuable and constructive comments for improving the paper.

\bibliographystyle{siamplain}
\bibliography{references}
\end{document}